\newtheorem{theorem}{Theorem}[section]
\newtheorem{exm}{Example}[section]
\newtheorem{lemma}{Lemma}
\theoremstyle{remark} 
\newcommand*{\rom}[1]{\expandafter\@slowromancap\romannumeral #1@}
\def\bege{\begin{equation}} \def\ende{\end{equation}}
   \def\begr{\begin{eqnarray}}
\def\endr{\end{eqnarray}} 
\def\bege{\begin{equation}} \def\ende{\end{equation}}
\def\begr{\begin{eqnarray}} \def\endr{\end{eqnarray}} \def\bnum{\begin{enumerate}} \def\enum{\end{enumerate}}
\begin{document}

\begin{center}
\textbf{Multiset and Mixed Metric Dimension for Starphene and Zigzag-Edge Coronoid}
\end{center}

\begin{center}
Jia-Bao Liu$^{1}$, Sunny Kumar Sharma$^{2}$, Vijay Kumar Bhat$^{2,\ast}$ and Hassan Raza$^{3}$
\end{center}
$^{1}$School of Mathematics and Physics, Anhui Jianzhu University, Hefei 230601, P. R. China.\\
$^{2}$School of Mathematics, Shri Mata Vaishno Devi University, Katra-$182320$, J\&K, India.\\
$^{3}$Business School, University of Shanghai for Science and Technology, Shanghai 200093, China.\\
$^{1}$liujiabaoad@163.com, $^{2}$sunnysrrm94@gmail.com, $^{a}$vijaykumarbhat2000@yahoo.com, \\
$^{3}$hassan\_raza783@yahoo.com\\\\
\textbf{Abstract} Let $\Gamma=(V,E)$ be a simple connected graph. A vertex $a$ is said to recognize (resolve) two different elements $b_{1}$ and $b_{2}$ from $V(\Gamma)\cup E(\Gamma)$ if $d(a, b_{1})\neq d(a, b_{2}\}$. A subset of distinct ordered vertices $U_{M}\subseteq V(\Gamma)$ is said to be a mixed metric generator for $\Gamma$ if each pair of distinct elements from $V\cup E$ are recognized by some element of $U_{M}$. The mixed metric generator with a minimum number of elements is called a mixed metric basis of $\Gamma$. Then, the cardinality of this mixed metric basis for $\Gamma$ is called the mixed metric dimension of $\Gamma$, denoted by $mdim(\Gamma)$. The concept of studying chemical structures using graph theory terminologies is both appealing and practical. It enables researchers to more precisely and easily examines various chemical topologies and networks. In this paper, we consider two well known chemical structures; starphene $SP_{a,b,c}$ and six-sided hollow coronoid $HC_{a,b,c}$ and respectively compute their multiset dimension and mixed metric dimension. \\\\
\textbf{MSC(2020)}: 05C12, 05C90.\\\\
\textbf{Keywords:} Multiresolving set; metric resolving set; mixed metric resolving set; hollow coronoid structure; starphene.\\\\
$^{\ast}$ Corresponding author.
\section{Introduction and Preliminaries}
Chemical graph theory is the topology branch of mathematical chemistry that employs graph theory to mathematical modeling of chemical phenomena. The complex and large-scale chemical structures possess a challenge to analyze in their natural form. Chemical graph theory is utilized to make these complex structures intelligible. The molecular graph is a graph of a chemical structure in which the vertices represent the atoms, and the edges represent the bonds that connect them.\\

The physical properties of a chemical structure are studied by utilizing a unique mathematical model in which each atom (vertex) possesses a unique identity within the structure. To identify the uniqueness of the entire atoms (vertices) in a chemical structure, we tend to choose those atoms (vertex) set, which would have a unique position to the selected atoms (vertices). This concept in graph theory \cite{ps}, is perceived as a metric basis, and in the applied graph theory \cite{fr}, it is known as a resolving set. \\

Suppose $\Gamma=(V,E)$ is a non-trivial, connected, simple, and finite graph with the edge set $E(\Gamma)$ and the vertex set $V(\Gamma)$. We write $V$ instead of $V(\Gamma)$ and $E$ instead of $E(\Gamma)$ throughout the manuscript when there is no scope for ambiguity. The {\it topological distance} (geodesic) between two vertices $a$ and $b$ in $\Gamma$, denoted by $d(a,b)$, is the length of a shortest $a-b$ path between the vertices $a$ and $b$ in the graph $\Gamma$. The number of edges that are incident to a vertex $\alpha$ of a graph $\Gamma$ is known as its $degree$ (or valency) and is denoted by $d_{\alpha}$. The minimum degree and the maximum degree of $\Gamma$ are denoted by $\delta(\Gamma)$ and $\Delta(\Gamma)$, respectively. An $independent$ $set$ is a set of vertices in $\Gamma$, in which no two vertices are adjacent.\\

The distance between a vertex $a\in V(\Gamma)$, and an edge $e = bc\in E(\Gamma)$ is defined as $d(a,e)=d(a,bc)=min\{d(a,b),d(a,c)\}$. A vertex $z\in V(\Gamma)$ resolves a pair of elements say $f, f^\prime \in V(\Gamma)\cup E(\Gamma)$ if $d(z,f)\neq d(z,f^{\prime})$. A set $U_{M}\subseteq V(\Gamma)$ is a mixed metric generator (MMG) when $f, f^\prime \in V(\Gamma)\cup E(\Gamma)$ is resolved by some vertex of $U_{M}$. The number of elements in the set $U_{M}$ is called mixed resolving basis, and it's minimum cardinality is termed as {\it mixed metric dimension}, denoted as $mdim(\Gamma)$. Equivalently, for an ordered subset of vertices $U_{M}=\{z_{1}, z_{2}, z_{3},...,z_{k}\}$, the $k^{th}$-mixed code (coordinate) of an element $z$ in $V(\Gamma)\cup E(\Gamma)$ is;
\begin{center}
  \begin{eqnarray*}
  \gamma_{M}(z|U_{M})&=&(d(z_{1},z),d(z_{2},z),d(z_{3},z),...,d(z_{k},z))
  \end{eqnarray*}
\end{center}
Then we say that, the set $U_{M}$ is a MMG for $\Gamma$, if $\gamma_{M}(p_{1}|U_{M})\neq \gamma_{M}(p_{2}|U_{M})$, for any pair of elements $p_{1}, p_{2} \in V\cup E$ with $p_{1} \neq p_{2}$. A set of distinct vertices $U^{i}_{M}$ (ordered) in $\Gamma$ is said to be an {\it independent mixed metric generator} (IMMG) for $\Gamma$ if $U^{i}_{M}$ is (i) independent set and (ii) MMG.\\

The concept of mixed metric dimension is actually the combination of the two concepts, those are metric dimension (denoted by $dim(\Gamma)$) which resolves vertices of the given graphs, and edge metric dimension (denoted by $edim(\Gamma)$) which resolves edges of the given graphs $\Gamma$. It was introduced by Kelenc et al. \cite{ssv1}, and recently it is studied for different families of graphs and in general as well. For further in depth of this concept, we refer the readers to \cite{H2019,H2020,HR2020,JS2021,JS12021,ssv1}. \\

Multisets (MSs for short and these are the sets with repeating elements) are of interest in computer science, physics, logic, philosophy, linguistic and mathematics. The multiplicity of units is the most important property of MS, allowing us to distinguish it from a set and regard it as a qualitatively new mathematical phenomenon. The MS $M=\{a,a,a,a,b,b,b,c,c,d\}$ is said to contain the element $a$ four times, the element $b$ three times, the element $c$ twice, and the element $d$ once. Since, the ordering of elements in MSs are ignored, the above MS $M$ is the same as $\{a,a,b,a,a,b,b,d,c,c\}$ and $\{c,c,d,a,b,b,a,a,a,b\}$. So, the MS is generally represented by identifying the number of times various types of elements appear in it. Thus, the MS $K$ can be written as $K=\{4.\bar{x},3.\bar{y},2.\bar{z},1.\bar{a}\}$. The numbers 1,2, 3, and 4 are said to be the $repetition$ $numbers$ of the MS $K$. A set, in particular, is an MS with all repetition numbers equal to one.\\

For a vertex $z$ and a set $U_{ms}=\{z_{1}, z_{2}, z_{3},...,z_{s}\}$ of $\Gamma$, we refer to the $s$-multiset $mr_{c}(z|U_{ms})=\{d(z_{1},z), d(z_{2},z), d(z_{3},z),...,d(z_{s},z)\}$ as the $multirepresentation$ (MR) of $z$ with respect to $U_{ms}$. A subset of vertices $U_{ms}$ in $V(\Gamma)$ is said to be a $multiresolving$ $set$ (MRS) for $\Gamma$ if for every $y,z \in V(\Gamma)$ with $y \neq z$, we have $mr_{c}(y|U_{ms})\neq mr_{c}(z|U_{ms})$. An MRS having minimum number of vertices is called a $multibasis$ for $\Gamma$ and the cardinality of the multibasis set is the $multiset$ $dimension$ (MSD) $msdim(\Gamma)$ of $\Gamma$ \cite{msd}.\\

Clearly, MRS does not exist for all connected graphs and so $msdim(\Gamma)$ is not defined for all non-trivial connected graphs $\Gamma$. For example, the complete graph $K_{n}$; $n\geq3$ has no MRS and MSD for $K_{n}$ is not defined i.e., $msdim(K_{n})=\infty$. On the other hand, if $\Gamma$ is a simple connected graph with $|V(\Gamma)|=n$, for which $msdim(\Gamma)$ is defined, then every MRS of $\Gamma$ is an RS of $\Gamma$, and thus $1\leq dim(\Gamma)\leq msdim(\Gamma) \leq n$.\\

The concept of metric dimension was put forward in \cite{ps}, and later this concept captured the attention of numerous researchers \cite{cee,1,svs,sv,ssv}. Due to the applicability perspective of this concept including geographical routing protocols \cite{pil}, robot navigation \cite{krr}, telecommunications \cite{zf}, combinatorial optimization \cite{co}, network discovery and verification \cite{zf}, connected joints in-network and chemistry \cite{cee}, it has been widely acknowledged. Kelenc et al. \cite{emd} introduced and initiated the research of a new variation of metric dimension in non-trivial connected graphs termed as the edge metric dimension, which focuses on uniquely identifying the graph\textquotesingle s edges. Some of the results concerning the edge metric dimension are studied in \cite{1ft,I2020,ssv,YZ2020}. The computational complexity and NP-hardness of all the resolvability parameter considered in this article are addressed in \cite{1,2}.
\subsection{Chemical Structures}

Benzene is an organic molecule possessing the chemical formula $C_{6}H_{6}$. It is utilized as a solvent in many commercial, scientific, and industrial activities. Benzene is an essential component of gasoline that may also be detected in crude oil. It is used to make plastics, dyes, pharmaceuticals products, detergents, rubber lubricants, insecticides, resins, and synthetic textiles. When benzene rings are joined together to produce more prominent polycyclic aromatic compounds, they are called polycyclic aromatic compounds.\\

Starphenes are two-dimensional polyaromatic hydrocarbons with three acene \cite{acn} arms joined by a single benzene ring. These structures can be employed as NOR \cite{nor} or NAND \cite{nand}  gates in single-molecule electronics. Moreover, as a type of 2D  polyaromatic hydrocarbons, starphenes may be appealing as a component in organic light-emitting diodes (OLEDs) or organic field-effect transistors (OFETs) \cite{oled}.\\

Brunvoll et al. \cite{edd} proposed the term coronoid to describe its possible relationship with benzenoid. A benzenoid with a hole in the middle is called a coronoid. Coronoid is an organic chemistry-based polyhex system. The zigzag-edge coronoids, as shown in Fig. 1, denoted by $HC_{a,b,c}$, can be considered as a structure formed by fusing six linear polyacenes segments into a closed loop. A hollow coronoid \cite{alikom2} is another name for this structure. It is classified as catacondensed coronoids, and it belongs to the primitive coronoids sub-cluster \cite{14}. A hollow coronoid as depicted in Fig. 1, has six sides viz., $a$, $b$, $c$, $a$, $b$, $c$.\\

Many studies have considered hollow coronoid. For example, \cite{16} discusses the relationship between the polyhex system and the hollow coronoid; \cite{111} discusses polynomial research; \cite{12} discusses the mathematical study of the coronoid and similar structures, and \cite{14,13} discusses the hollow coronoids and their generalizations.\\

The notion of metric dimension and its variants is considered for numerous chemical structures due to its applicability in chemical sciences. The vertex resolvability of $H$-Napthalenic nanotubes and $VC_{5}C_{7}$ were discussed in \cite{17n}. In \cite{45n}, the partition dimension and the metric dimension of 2D lattices of the infinite version of $HAC_{5}C_{6}C_{7}$, $HAC_{5}C_{7}$, and $HC_{5}C_{7}$ nanotubes are obtained. The upper bounds for the minimum vertex resolving sets of silicate star networks are established in \cite{41n}. The metric dimension of the 2D lattice of $\alpha$-Boron nanotubes is discussed in \cite{16n}, and the upper bounds for the minimum vertex metric generators of cellulose network are obtained in \cite{40n}. A set of distinct ordered vertices $U^{i}$ in $\Gamma$ is said to be an Independent vertex resolving set(IVRS) for $\Gamma$ if $U$ is both independent as well as resolving set. A set of distinct vertices $U^{i}_{E}$ (ordered) in $\Gamma$ is said to be an Independent edge resolving set (IERS) for $\Gamma$ if $U^{i}_{E}$ is both independent as well as edge metric generator (EMG). \\

In terms comibinatorial properties of the mixed metric generators, it is quite clear that any mixed metric generator is edge as well metric generator. So the following relation is presented as\\

 \noindent{Remark 1}\cite{mmd}:  For any graph $\Gamma$ of order $n$
 \begin{equation*}
mdim(\Gamma)\geq max\{dim(\Gamma), edim(\Gamma)\}
 \end{equation*}

It is imperative to note that a vertex by itself cannot form a mixed metric generator. It means that mixed metric generator must be $mdim(\Gamma)\geq 2$. More cleary it can be written as. \\

\noindent{Remark 2}\cite{mmd}:  For any graph $\Gamma$ of order $n$
\begin{equation*}
2	\leq mdim(\Gamma)\leq n.
\end{equation*}
In the introductory article by Kelenc et al. \cite{mmd} regarding the mixed metric dimension, the authors posed several questions one of which was which families of graphs have $mdim(\Gamma)=dim (\Gamma)$,  $mdim(\Gamma)=edim(\Gamma)$.  In this paper we not only compute mixed metric dimension of studied graph but also compare the results with the metric and edge metric dimension. \\

Koam et al. \cite{alikom2}, recently studied the vertex and edge resolvability for hollow coronoid $HC_{a,b,c}$. They proved that $HC_{a,b,c}$ is the family of plane graphs with the constant vertex and edge metric dimension(EMD). The metric dimension and the EMD for $HC_{a,b,c}$ is as follows:
\begin{theorem}
	$dim(HC_{a,b,c})=3$, for $a,b,c\geq 3$.
\end{theorem}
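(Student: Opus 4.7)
The plan is to establish the equality by proving the two inequalities $dim(HC_{a,b,c}) \geq 3$ and $dim(HC_{a,b,c}) \leq 3$ separately, which is the standard strategy for pinning down metric dimension of a specific graph family.

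For the upper bound, I would exhibit an explicit resolving set $W = \{w_1, w_2, w_3\} \subseteq V(HC_{a,b,c})$ of size three. Looking at the symmetric six-sided layout of the hollow coronoid (sides $a,b,c,a,b,c$), the natural candidates are three vertices placed on distinct sides of the outer boundary — for instance, one vertex near a corner of the outer hexagon on each of three non-opposite sides, chosen so that no two of them are fixed by the same nontrivial automorphism. After fixing a coordinate labeling of vertices (say $u_{i,j}$ for outer layer and $v_{i,j}$ for inner layer, with $i$ indexing the side and $j$ the position along that side), I would compute the distance vector $r(x|W)=(d(x,w_1),d(x,w_2),d(x,w_3))$ for every vertex $x$. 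The verification would be broken into cases by which "side'' $x$ lies on, using the fact that geodesics in $HC_{a,b,c}$ either go around the outside, cross through the body, or wind around the hole, and only one of these is optimal depending on the relative positions of $x$ and $w_i$.

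For the lower bound, I would show $dim(HC_{a,b,c}) \geq 3$ by ruling out every two-vertex subset. By Remark 2 applied to the metric dimension (the graph is not a path, so $dim \geq 2$), it suffices to show that no pair $\{w_1,w_2\}$ resolves all vertices. I would exploit the dihedral-type symmetry of $HC_{a,b,c}$: for any choice of two vertices $w_1,w_2$, either (i) there exists a nontrivial automorphism $\sigma$ of $HC_{a,b,c}$ fixing both $w_1$ and $w_2$ and moving some vertex $x$ to $\sigma(x)\neq x$, whence $r(x|\{w_1,w_2\})=r(\sigma(x)|\{w_1,w_2\})$, or (ii) one can explicitly exhibit two vertices on a common "ring'' around the hole that are equidistant from both $w_1$ and $w_2$ because the two competing geodesic routes (clockwise and counterclockwise around the hole) produce the same minimum. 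I would split into cases by whether $w_1,w_2$ both lie on the outer boundary, both on the inner boundary, or one of each, and in each case point to an unresolved pair.

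The main obstacle will be the lower bound. The upper bound is a bookkeeping exercise once the labeling is fixed, but showing that no two-vertex set resolves requires a careful symmetry analysis: the positions of $w_1,w_2$ can be fairly general, and in cases where neither a global automorphism fixes them nor an obvious equidistant pair is visible, one must produce an ad-hoc equidistant pair via the two homotopy classes of paths encircling the central hole. Managing these cases uniformly across the parameters $a,b,c\geq 3$ — in particular ensuring that the equidistant witnesses actually exist inside $HC_{a,b,c}$ and are not degenerate when $a$, $b$, or $c$ is small — is where the argument will require the most care.
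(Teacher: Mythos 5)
First, a point of reference: the paper you were given does not prove this statement. Theorem 1.1 is quoted as a known preliminary from Koam et al.\ \cite{alikom2}, so there is no in-paper proof to compare against. Judged on its own terms, your two-inequality plan is the standard route, and the upper-bound half is sound: it is exactly the template the authors use in Section~2 for $mdim(HC_{a,b,c})$, where they take $\{p_{1,1},r_{1,1},p_{2,1}\}$ and verify distinctness of the codes by a case analysis over the vertex classes of the six segments. (A small slip: Remark~2 of the paper concerns $mdim$, not $dim$; the bound $dim(\Gamma)\geq 2$ for non-paths follows from the elementary fact that $dim(\Gamma)=1$ if and only if $\Gamma$ is a path.)

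The genuine gap is in your lower bound. The dichotomy you propose --- either a nontrivial automorphism fixes both $w_1$ and $w_2$, or an equidistant pair arises from the two homotopy classes of geodesics around the hole --- is neither justified nor exhaustive. The automorphism group of $HC_{a,b,c}$ is a small dihedral-type group (of order at most $12$, and smaller unless $a=b=c$), so for all but a handful of special pairs $\{w_1,w_2\}$ no nontrivial automorphism fixes both, and branch (i) is essentially vacuous. Branch (ii) is then carrying the entire proof, but the mechanism you name only explains why a \emph{single} landmark $w$ fails to separate two vertices for which the clockwise and counterclockwise routes around the hole tie; to defeat the pair $\{w_1,w_2\}$ you need two vertices that are simultaneously unresolved by \emph{both} landmarks, and the tie for $w_1$ gives you no control over distances to $w_2$. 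What is actually required --- and what proofs of this kind in the literature carry out --- is an exhaustive case analysis on the positions of $w_1,w_2$ relative to the six segments and the inner/outer boundaries, exhibiting in each case a concrete unresolved pair (typically two vertices placed symmetrically with respect to the geodesic structure determined by $\{w_1,w_2\}$, not necessarily on a common ring). As written, your lower-bound argument would not close.
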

\begin{theorem}
	$edim(HC_{a,b,c})=3$, for $a,b,c\geq 3$.
\end{theorem}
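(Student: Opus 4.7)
The plan is to prove both $edim(HC_{a,b,c})\le 3$ and $edim(HC_{a,b,c})\ge 3$ by exploiting a coordinate labelling of the hollow coronoid that reflects its six-arm structure. First I would fix a labelling of the vertex set: parametrise the vertices along each of the six zigzag arms (of lengths $a,b,c,a,b,c$) by an integer index, and label the six ``corner'' hubs where consecutive arms meet. The crucial geometric fact is that the shortest path between two vertices lying on different arms must either wrap around the outer boundary or around the inner hexagonal hole, and for most pairs the outer route is shorter; this yields a piecewise-linear formula for $d(u,v)$ that depends on which arms $u$ and $v$ occupy.

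For the upper bound, I would exhibit an explicit 3-vertex edge metric generator $W=\{w_{1},w_{2},w_{3}\}$, choosing the $w_{i}$ at strategically non-symmetric positions on three alternating arms so as to break the dihedral symmetries of $HC_{a,b,c}$. Then I would compute the edge code $r(e\mid W)=(d(w_{1},e),d(w_{2},e),d(w_{3},e))$ for a representative edge from each of the six arms and each of the six corner hexagons, and verify that no two codes coincide. Because the distances admit a closed-form description as linear functions of the arm indices, checking distinctness reduces to showing that the three coordinate functions separate every pair of edges.

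For the lower bound $edim(HC_{a,b,c})\ge 3$ I would rule out any 2-vertex generator. Suppose $\{u,v\}$ edge-resolves $\Gamma$; a case split according to the arms containing $u$ and $v$ (using the $D_{3}$ symmetry to cut the cases roughly in half) exhibits, in each case, a pair of ``twin'' edges on opposite sides of the hole whose distance pairs $(d(u,e),d(v,e))$ coincide. The twins are easiest to describe as pairs of consecutive edges incident to a common degree-two vertex on the side of $\Gamma$ farthest from both $u$ and $v$.

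The main obstacle will be the upper bound verification: because $HC_{a,b,c}$ contains $\Theta(a+b+c)$ edges spread over the six arms, the case analysis for distinctness of edge codes is large, and one must treat separately those edge pairs that sit at geometrically mirror-symmetric positions relative to two of the three chosen generators. Careful placement of $w_{1},w_{2},w_{3}$ at positions whose arm indices are mutually incongruent modulo small integers is what ultimately guarantees that the third coordinate separates every such mirror pair.
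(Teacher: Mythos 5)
You should first note that the paper does not actually prove this statement: it is quoted from Koam et al.\ \cite{alikom2}, so there is no in-paper proof to compare against. The closest analogue is the paper's own proof of $mdim(HC_{a,b,c})=3$ (Theorem 2.1), which follows exactly the strategy you describe for the upper bound: fix a three-vertex set, write every vertex and edge code as a piecewise-linear function of the index along each arm, and check that the resulting families of codes are pairwise disjoint. So your overall plan (explicit generator for $edim\leq 3$, exhaustion over two-vertex sets for $edim\geq 3$) is the standard and correct one for this class of results.

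The genuine gap is that your proposal contains almost none of the content that constitutes such a proof. You never name $w_{1},w_{2},w_{3}$ (for comparison, the paper's Theorem 2.1 uses $\{p_{1,1},r_{1,1},p_{2,1}\}$, three corner vertices of degree two, rather than ``strategically non-symmetric'' interior positions); you never derive the distance formulas, which for a coronoid are delicate precisely because the inner $stu$-cycle and the outer $pqr$-cycle give two competing routes and the breakpoints of the resulting piecewise-linear codes must be located exactly; and for the lower bound you assert the existence of twin edges for every two-vertex set without exhibiting a single one. The last point carries the most weight: $edim\geq 3$ does not follow from any general principle here, since cycles have edge metric dimension $2$ and $\Delta(HC_{a,b,c})=3$, so the entire burden of the lower bound rests on the case analysis you defer. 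As written, the proposal is a plausible outline, not a proof; to close it you would need to (i) commit to a concrete $W$, (ii) tabulate the edge codes for all edge classes (the arm edges, the rung edges joining the outer and inner cycles, and the twelve junction edges $\eta_{i}$), as is done for the mixed codes in Theorem 2.1, and (iii) for each symmetry class of pairs $\{u,v\}$ produce an explicit pair of edges with identical codes.
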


In \cite{msd}, Saenpholphat introduced the concept of MRS in graphs and he proved that there exists no connected graph $\Gamma$ for which $msdim(\Gamma)=2$. Also, the MSD of bipartite graphs, paths $P_{n}$, complete graphs $K_{n}$, and cycles $C_{n}$ were computed. Now, we present some basic results and observations regarding MSD in order to carry out our main work.

\begin{lemma}\label{L1}\cite{msd}
$msdim(P_{n})=1$ iff $P_{n}$ is a path graph of length $n$.
\end{lemma}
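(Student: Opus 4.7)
The plan is to establish Lemma \ref{L1} by proving both implications of the biconditional, interpreting the statement in its natural form: among connected graphs on $n$ vertices, those with multiset dimension equal to $1$ are exactly the paths $P_n$.

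For the easy direction, I would label the vertices of $P_n$ consecutively as $v_1, v_2, \ldots, v_n$ and take the singleton $U_{ms} = \{v_1\}$ to be an endpoint. Then $d(v_1, v_i) = i - 1$, so the $n$ multirepresentations $mr_c(v_i | \{v_1\}) = \{i - 1\}$ are pairwise distinct, making $\{v_1\}$ a multiresolving set. Since no multiresolving set of smaller cardinality exists for a graph with at least two vertices (the empty set cannot separate anything), this yields $msdim(P_n) = 1$.

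The substantive direction assumes $msdim(\Gamma) = 1$ for a connected graph $\Gamma$ on $n$ vertices. Then some singleton $\{w\}$ is a multiresolving set, and the requirement $mr_c(y|\{w\}) \neq mr_c(z|\{w\})$ for $y \neq z$ translates to the distances $d(w, v)$ being pairwise distinct as $v$ ranges over $V(\Gamma)$. These $n$ distances are nonnegative integers, the smallest being $d(w, w) = 0$, and they are all bounded above by $\mathrm{diam}(\Gamma) \leq n - 1$. Having $n$ distinct integers inside $\{0, 1, \ldots, n - 1\}$ forces the distance set to equal $\{0, 1, \ldots, n - 1\}$ exactly, and in particular forces $\mathrm{diam}(\Gamma) = n - 1$.

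The main obstacle, after pinning down the diameter, is to rule out any chord relative to the BFS layering from $w$. Enumerating the vertices as $u_0 = w, u_1, \ldots, u_{n-1}$ with $d(w, u_i) = i$, the triangle inequality applied to any edge $u_i u_j \in E(\Gamma)$ gives $|i - j| \leq 1$, and since the indices are distinct, we must have $|i - j| = 1$. Conversely, each $u_{i+1}$ needs at least one neighbor at distance $i$ from $w$ to realize its own distance, and the only candidate is $u_i$. Hence the edge set of $\Gamma$ is precisely $\{u_i u_{i+1} : 0 \leq i \leq n - 2\}$, i.e., $\Gamma \cong P_n$, completing the proof.
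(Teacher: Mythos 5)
Your proof is correct. Note that the paper itself offers no proof of this lemma---it is quoted without argument from the cited source \cite{msd}---so there is nothing in the text to compare against; your argument is the standard one and fills that gap cleanly. You also rightly repaired the awkward phrasing of the statement (as literally written, ``$msdim(P_n)=1$ iff $P_n$ is a path'' is vacuous) by reading it as a characterization of connected graphs with multiset dimension one, and both directions of your argument---distinct distances from a single vertex forcing the distance multiset to be $\{0,1,\dots,n-1\}$ with one vertex per level, and the triangle inequality plus connectivity forcing the edge set to be exactly the consecutive pairs---are sound.
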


\begin{lemma}\label{L2}\cite{msd}
Let $\Gamma \neq P_{n}$ be a graph. Then $msdim(\Gamma)\geq3$.
\end{lemma}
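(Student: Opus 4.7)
My plan is to rule out the two cases $msdim(\Gamma)=1$ and $msdim(\Gamma)=2$ separately, since the bound $msdim(\Gamma)\geq 3$ is equivalent to excluding precisely these two values.

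\smallskip

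\textbf{Step 1: Excluding $msdim(\Gamma)=1$.} This is immediate from Lemma \ref{L1}: since $msdim(\Gamma)=1$ forces $\Gamma$ to be a path $P_n$, and by hypothesis $\Gamma\neq P_n$, we conclude $msdim(\Gamma)\neq 1$.

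\smallskip

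\textbf{Step 2: Excluding $msdim(\Gamma)=2$.} This is the crux of the argument, and I would prove it by direct contradiction. Suppose $U_{ms}=\{z_1,z_2\}$ is a multiresolving set for $\Gamma$ and set $k=d(z_1,z_2)$. Compute the two multirepresentations
\begin{equation*}
mr_c(z_1\mid U_{ms})=\{d(z_1,z_1),d(z_2,z_1)\}=\{0,k\},
\end{equation*}
\begin{equation*}
mr_c(z_2\mid U_{ms})=\{d(z_1,z_2),d(z_2,z_2)\}=\{k,0\}.
\end{equation*}
Because multisets are unordered, these two expressions denote the same multiset $\{0,k\}$. Hence the distinct vertices $z_1$ and $z_2$ are not distinguished by $U_{ms}$, contradicting the defining property of an MRS. Consequently no two-element subset of $V(\Gamma)$ can be an MRS, so $msdim(\Gamma)\neq 2$.

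\smallskip

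\textbf{Step 3: Conclusion.} Combining Steps 1 and 2 (and using that $msdim(\Gamma)\geq 1$ whenever it is defined, while an undefined value is treated as $\infty\geq 3$), we obtain $msdim(\Gamma)\geq 3$, as required.

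\smallskip

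I do not expect any substantial obstacle here: the exclusion of value $2$ relies only on the symmetry of the distance function $d(\cdot,\cdot)$ and the identity $d(v,v)=0$, and makes no use of the specific structure of $\Gamma$. (In fact the argument in Step 2 shows the stronger fact that $msdim(\Gamma)\neq 2$ for \emph{every} connected graph with at least two vertices; the hypothesis $\Gamma\neq P_n$ is needed only for Step 1.) The only minor point to be careful about is the multiset interpretation in the equality $\{0,k\}=\{k,0\}$, which is precisely the feature that distinguishes multiset dimension from ordinary metric dimension and is the source of the failure.
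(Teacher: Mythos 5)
Your proof is correct and is essentially the canonical argument: the paper itself does not prove this lemma but cites it from \cite{msd}, and the proof there is exactly your two steps — value $1$ is excluded by the path characterization (Lemma \ref{L1}), and value $2$ is excluded because the two vertices of any candidate set $\{z_1,z_2\}$ receive the identical multiset $\{0,d(z_1,z_2)\}$ as their multirepresentation. Your parenthetical observations (that the hypothesis $\Gamma\neq P_n$ is used only to exclude value $1$, and that the undefined case $msdim(\Gamma)=\infty$ is vacuously fine) are accurate.
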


The present paper is organized as follows. In Sect. 2 concepts and theory related to mixed metric dimension, edge metric dimension, vertex metric dimension, and independence in their respective resolving sets have been discussed. In Sect. 3 we study the mixed metric dimension and independence in the mixed resolving set of $HC_{a,b,c}$. In Sect. 4, the exact multiset dimensions for $SP_{a,b,c}$ has been obtained. Finally, the future work and conclusion of this work is presented in section 5.

\section{Minimum Mixed Resolving Number of $HC_{a,b,c}$}
In this section, we obtain the mixed metric dimension and IMMG for $HC_{a,b,c}$.\\\\
\textbf{Hollow Coronoid $HC_{a,b,c}$:} Hollow coronoid $HC_{a,b,c}$ comprises of six sides, in which three sides $(a,b,c)$ are symmetric to other three sides $(a,b,c)$ as shown in Fig. 1. This means that $HC_{a,b,c}$ has three linear polyacenes segments consist of $a$, $b$, and $c$ number of benzene rings. It consists of $2a+2b+2c-6$ number of faces having six sides, a face having $4a+4b+4c-18$ sides, and a face having $4a+4b+4c-6$ sides. $HC_{a,b,c}$ has $4a+4b+4c-12$ number of vertices of degree two and $4a+4b+4c-12$ number of vertices of degree three. From this, we find that $\delta(HC_{a,b,c})=2$ and $\Delta(HC_{a,b,c})=3$. The vertex set and the edge set of $HC_{a,b,c}$, are denoted by $V(HC_{a,b,c})$ and $E(HC_{a,b,c})$ respectively. Moreover, the cardinality of edges and vertices in $HC_{a,b,c}$ is given by $|E(HC_{a,b,c})|=10(a+b+c-3)$ and $|V(HC_{a,b,c})|=8(a+b+c-3)$, respectively. The edge and vertex set of $HC_{a,b,c}$ are describe as follows: $V(HC_{a,b,c})=\{p_{1,g}, p_{2,g}|1\leq g\leq 2a-1\}\cup\{q_{1,g}, q_{2,g}|1\leq g\leq 2c-1\}\cup\{r_{1,g}, r_{2,g}|1\leq g\leq 2b-1\}\cup \{s_{1,g}, s_{2,g}|1\leq g\leq 2a-3\}\cup \{u_{1,g}, t_{2,g}|1\leq g\leq 2b-3\}\cup\{t_{1,g}, u_{2,g}|1\leq g\leq 2c-3\}$ \\\\
and\\\\
$E(HC_{a,b,c})=\{p_{1,g}p_{1,g+1}, p_{2,g}p_{2,g+1}|1\leq g\leq 2a-2\}\cup\{q_{1,g}q_{1,g+1}, q_{2,g}q_{2,g+1}|1\leq g\leq 2c-2\}\cup\{r_{1,g}r_{1,g+1}, r_{2,g}r_{2,g+1}|1\leq g\leq 2b-2\}\cup \{s_{1,g}s_{1,g+1}, s_{2,g}s_{2,g+1}|1\leq g\leq 2a-4\}\cup \{u_{1,g}u_{1,g+1}, t_{2,g}t_{2,g+1}|1\leq g\leq 2b-4\}\cup\{t_{1,g}t_{1,g+1}, u_{2,g}u_{2,g+1}|1\leq g\leq 2c-4\}\cup\{p_{1,2g}s_{1,2g-1}, p_{2,2g}s_{2,2g-1}|1 \leq g \leq a-1\}\cup\{q_{1,2g}t_{1,2g-1}, q_{2,2g}u_{2,2g-1}|1 \leq g \leq c-1\}\cup\{r_{1,2g}u_{1,2g-1}, r_{2,2g}t_{2,2g-1}|1 \leq g \leq b-1\}\cup \{p_{1,1}q_{2,1}, s_{1,1}u_{2,1}, \\ p_{1,i}q_{1,1}, s_{1,i-2}t_{1,1}, q_{1,j}r_{1,1}, t_{1,j-2}u_{1,1}, r_{1,k}p_{2,i}, u_{1,k-2}s_{2,l}, p_{2,1}r_{2,k}, s_{2,1}t_{2,k-2}, r_{2,1}q_{2,j}, t_{2,1}u_{2,j-2}\}$.
\begin{center}
  \begin{figure}[h!]
  \centering
   \includegraphics[width=3.0in]{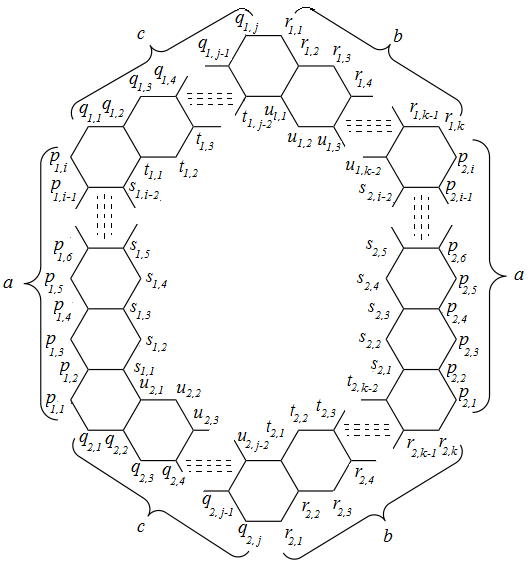}
  \caption{$HC_{a,b,c}$}\label{p2}
\end{figure}
\end{center}
We name the vertices on the cycle $p_{1,1},...,p_{1,i},q_{1,1},...,q_{1,j}r_{1,1},...,r_{1,k},p_{2,1},...,p_{2,i},q_{2,1},...,q_{2,j}r_{2,1},...,\\r_{2,k}$ as the outer $pqr$-cycle vertices and the vertices on the cycle $s_{1,1},...,s_{1,i-2},t_{1,1},...,t_{1,j-2}u_{1,1},...,\\u_{1,k-2},s_{2,1},...,s_{2,i-2},t_{2,1},...,t_{2,k-2}u_{2,1},...,u_{2,j-2}$ as the inner $stu$-cycle vertices. In vertices, $p_{1,i}$, $p_{2,i}$, $q_{1,j}$, $q_{2,j}$, $r_{1,k}$, and $r_{2,k}$, the indices $i=2a-1$, $j=2c-1$ and $k=2b-1$. In the next result, we determine the MMD of $HC_{a,b,c}$.

\begin{theorem}
For $a,b,c\geq 4$, we have $mdim(HC_{a,b,c})=3$.
\end{theorem}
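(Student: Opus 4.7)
The lower bound $mdim(HC_{a,b,c})\geq 3$ is immediate: by Remark 1, $mdim(\Gamma)\geq \max\{dim(\Gamma),edim(\Gamma)\}$, and Theorems 1.1--1.2 (Koam et al.) give $dim(HC_{a,b,c})=edim(HC_{a,b,c})=3$. So the entire content of the theorem is the matching upper bound $mdim(HC_{a,b,c})\leq 3$, i.e.\ exhibiting a $3$-vertex set that resolves every pair of distinct elements of $V(HC_{a,b,c})\cup E(HC_{a,b,c})$.

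The plan is to pick a candidate set $U_M=\{x,y,z\}\subseteq V(HC_{a,b,c})$ sitting on the outer $pqr$-cycle, chosen so that the three reference vertices break the rotational/reflective symmetry of the coronoid. A natural first guess, guided by the structure of the resolving sets used for $dim$ and $edim$ in \cite{alikom2}, is to take one vertex from each of the three outer arms, for example $x=p_{1,1}$, $y=q_{1,1}$, $z=r_{1,1}$ (or a nearby corner triple). I would then compute the distance function $d(\cdot,x), d(\cdot,y), d(\cdot,z)$ on the graph by walking along each of the six outer segments and the six inner segments separately; because $HC_{a,b,c}$ is planar with one hole, each such distance is a piecewise-linear function of the segment index $g$ with at most one ``break'' (corresponding to whether the shortest path goes clockwise or counter-clockwise around the hole, and whether it uses the inner or outer cycle).

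With those distance formulas in hand, the verification is a case analysis organized by where the two elements $p_1,p_2\in V\cup E$ being distinguished sit:
\begin{enumerate}
\item $p_1,p_2$ both vertices on the same outer (or inner) segment --- here one coordinate is strictly monotone in the segment index, so distinctness is automatic;
\item $p_1,p_2$ both edges on the same segment --- same monotonicity argument, noting that for an edge $e=bc$ the coordinate is $\min\{d(\cdot,b),d(\cdot,c)\}$, which stays monotone along a segment;
\item $p_1$ a vertex and $p_2$ an edge (the genuinely ``mixed'' case) on the same segment --- here one uses that $d(x,uv)\in\{d(x,u),d(x,v)\}$ equals exactly one endpoint-distance, so its code differs from either endpoint's code in at least one of the remaining two coordinates, provided $U_M$ has been chosen so that for each edge $uv$ at most one reference vertex $z_i\in U_M$ satisfies $d(z_i,u)=d(z_i,v)$;
\item $p_1,p_2$ on different segments --- the code tuples lie in different ``regions'' and are separated by comparing which coordinate attains its minimum/maximum.
\end{enumerate}

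The genuinely delicate step is case (3) together with the cross-segment pairs in case (4) for edges that straddle the inner/outer cycle via the spoke edges $p_{1,2g}s_{1,2g-1}$ and their analogues; these are the edges where an MMG typically fails, because the two endpoints can have equal distance from several reference vertices at once. This is the main obstacle, and it dictates the precise choice of $U_M$: the three anchors must be placed so that no spoke edge has all three coordinates of the form $d(z_i,u)=d(z_i,v)$. Assuming (as the authors certainly do) that a suitable triple exists uniformly for all $a,b,c\geq 4$, one tabulates the codes in the lemma-style tables standard in this literature, reads off that all codes are pairwise distinct, and concludes $mdim(HC_{a,b,c})\leq 3$. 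Combined with the lower bound above, this yields $mdim(HC_{a,b,c})=3$.
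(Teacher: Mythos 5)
Your overall strategy coincides with the paper's: the lower bound $mdim(HC_{a,b,c})\geq 3$ is obtained exactly as you describe, from Remark 1 together with $dim(HC_{a,b,c})=edim(HC_{a,b,c})=3$, and the upper bound is to be obtained by exhibiting a three-vertex mixed metric generator and checking distance codes. You also correctly locate the delicate part (vertex-versus-edge pairs and the spoke edges joining the outer $pqr$-cycle to the inner $stu$-cycle). The genuine gap is that you never actually produce a triple that works: you close the argument with ``assuming \dots that a suitable triple exists uniformly for all $a,b,c\geq 4$,'' and that assumption \emph{is} the content of the upper bound. Your cases (1)--(4) are a verification plan, not a verification. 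Moreover, your first candidate $\{p_{1,1},q_{1,1},r_{1,1}\}$ (one anchor per arm) is not the set the paper uses; the paper takes $U_M=\{p_{1,1},r_{1,1},p_{2,1}\}$, i.e.\ two anchors on the two symmetric $a$-sides and one on a $b$-side, and then verifies the claim by writing out the mixed code of every vertex class and every edge class (including the twelve corner edges $\eta_1,\dots,\eta_{12}$ and the three families of spoke edges) and checking that the resulting code sets are pairwise distinct. A fully symmetric one-per-arm placement is precisely the kind of choice for which symmetric pairs of elements are most likely to receive identical codes, so the choice of anchors cannot be left open.

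A secondary issue is your case (3): the claim that a vertex and an edge are separated ``provided for each edge $uv$ at most one reference vertex satisfies $d(z_i,u)=d(z_i,v)$'' is not a proof of separation, since the comparison needed is between $d(z_i,w)$ for an arbitrary vertex $w$ and $\min\{d(z_i,u),d(z_i,v)\}$ for an arbitrary (not necessarily incident) edge $uv$, and equality of full code vectors can occur even when each individual edge has a coordinate that ``breaks ties'' between its own endpoints. The paper does not argue structurally here at all; it simply computes all codes explicitly. To turn your outline into a proof you would need either to carry out that explicit tabulation for a fixed triple, or to state and prove the monotonicity and cross-segment separation claims in (1)--(4) as actual lemmas for that triple.
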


\begin{proof}
In order to show that $mdim(HC_{a,b,c}) \leq 3$, we have to construct a mixed metric generator for $HC_{a,b,c}$ having cardinality less than or equal to three. Suppose $U_{M}=\{p_{1,1}, r_{1,1}, p_{2, 1}\}$ be a set of distinct vertices from $HC_{a,b,c}$. We claim that $U_{M}$ is a mixed metric generator for $HC_{a,b,c}$. Now, to obtain $mdim(HC_{a,b,c}) \leq 3$, we can give mixed codes to every element of $V(HC_{a,b,c})\cup E(HC_{a,b,c})$ with respect to the set $U_{M}$. Firstly, we give mixed codes to every vertex of $HC_{a,b,c}$.\\\\
For the vertices $\{p_{1,g}|1 \leq g \leq 2a-1\}$, the mixed codes are as follows:
\begin{eqnarray*}
   \gamma_{M}(p_{1,g}|U_{M}) = \begin{cases}
                         (g-1,2a+2c-g-1,2b+2c-1),   & g=1;\\
                         (g-1,2a+2c-g-1,2b+2c+g-4), & 2 \leq g \leq 2a-1.
                                             \end{cases}
\end{eqnarray*}
For the vertices $\{q_{1,g}|1 \leq g \leq 2c-1\}$, the mixed codes are as follows:
\begin{eqnarray*}
   \gamma_{M}(q_{1,g}|U_{M}) = \begin{cases}
                         (2a+g-2, 2c-g, 2a+2b+2c-g-5), & 1 \leq g \leq 2c-2;\\
                         (2a+g-2, 2c-g, 2a+2b-2),      & g=2c-1.
                                             \end{cases}
\end{eqnarray*}
For the vertices $\{r_{1,g}|1 \leq g \leq 2b-1\}$, the mixed codes are as follows:
\begin{eqnarray*}
   \gamma_{M}(r_{1,g}|U_{M}) = \begin{cases}
                         (2a+2c-2,g-1,2a+2b-g-2),    & g=1;\\
                         (2a+2c+g-5,g-1,2a+2b-g-2),  & 2 \leq g \leq 2b-1.
                                             \end{cases}
\end{eqnarray*}
For the vertices $\{p_{2,g}|1 \leq g \leq 2a-1\}$, the mixed codes are as follows:
\begin{eqnarray*}
   \gamma_{M}(p_{2,g}|U_{M}) = \begin{cases}
                         (2b+2c-1,2a+2b-g-2,g-1),    & g=1;\\
                         (2b+2c+g-4,2a+2b-g-2,g-1),  & 2 \leq g \leq 2a-1.
                                             \end{cases}
\end{eqnarray*}
For the vertices $\{q_{2,g}|1 \leq g \leq 2c-1\}$, the mixed codes are as follows:
\begin{eqnarray*}
   \gamma_{M}(q_{2, g}|U_{M})= \begin{cases}
                         (g,2b+2c-g-1,2a+2c-1),    & g=1;\\
                         (g,2b+2c-g-1,2a+2c+g-4),  & 2 \leq g \leq 2c-1.
                                             \end{cases}
\end{eqnarray*}
For the vertices $\{r_{2,g}|1 \leq g \leq 2b-1\}$, the mixed codes are as follows:
\begin{eqnarray*}
   \gamma_{M}(r_{2,g}|U_{M})= \begin{cases}
                         (2c+g-1,2b-g,2a+4b-g-5), & 1\leq g \leq 2b-2;\\
                         (2c+g-1,2b-g,2a+2b-2),   & g=2b-1.
                                             \end{cases}
\end{eqnarray*}
For the vertices $\{s_{1,g}|1 \leq g \leq 2a-3\}$, the mixed codes are as follows:
\begin{eqnarray*}
   \gamma_{M}(s_{1,g}|U_{M})= \begin{cases}
                         (g+1,2a+2c-g-3,2b+2c+g-4),  & 1\leq g\leq 2a-3.
                                             \end{cases}
\end{eqnarray*}
For the vertices $\{t_{1,g}|1 \leq g \leq 2c-3\}$, the mixed codes are as follows:
\begin{eqnarray*}
   \gamma_{M}(t_{1,g}|U_{M})=\begin{cases}
                         (2a+g-2,2c-g,2a+2b+2c-g-7),  & 1\leq g\leq 2c-3.
                                             \end{cases}
\end{eqnarray*}
For the vertices $\{u_{1,g}|1 \leq g \leq 2b-1\}$, the mixed codes are as follows:
\begin{eqnarray*}
   \gamma_{M}(u_{1,g}|U_{M})=\begin{cases}
                         (2a+2c+g-5,g+1,2a+2b-g-4),  & 1\leq g\leq 2b-3.
                                             \end{cases}
\end{eqnarray*}
For the vertices $\{s_{2,g}|1 \leq g \leq 2a-3\}$, the mixed codes are as follows:
\begin{eqnarray*}
   \gamma_{M}(s_{2,g}|U_{M})=\begin{cases}
                         (2a+2c+g-6,2a+2b-g-4,g+1),  & 1\leq g\leq 2a-3.
                                             \end{cases}
\end{eqnarray*}
For the vertices $\{u_{2,g}|1\leq g\leq 2c-3\}$, the mixed codes are as follows:
\begin{eqnarray*}
   \gamma_{M}(u_{2,g}|U_{M})=\begin{cases}
                         (g+2,2a+2c+g-4,2b+2c-g-3),  & 1\leq g \leq 2c-3.
                                             \end{cases}
\end{eqnarray*}
For the vertices $\{t_{2,g}|1\leq g\leq 2b-3\}$, the mixed codes are as follows:
\begin{eqnarray*}
   \gamma_{M}(t_{2,g}|U_{M})=\begin{cases}
                         (2c+g-1,2a+4b-g-7,2b-g),  & 1\leq g \leq 2b-3.
                                             \end{cases}
\end{eqnarray*}
Next, we give mixed codes to every edge of $HC_{a,b,c}$. For the edges $\{p_{1,g}p_{1,g+1}|1 \leq g \leq 2a-2\}$, the mixed codes are as follows:
\begin{eqnarray*}
   \gamma_{M}(p_{1,g}p_{1,g+1}|U_{M})= \begin{cases}
                         (g-1,2a+2c-g-2,2b+2c-2),    & g=1;\\
                         (g-1,2a+2c-g-2,2b+2c+g-4),  & 2 \leq g \leq 2a-2.
                                             \end{cases}
\end{eqnarray*}
For the edges $\{q_{1,g}q_{1,g+1}|1 \leq g \leq 2c-2\}$, the mixed codes are as follows:
\begin{eqnarray*}
   \gamma_{M}(q_{1,g}q_{1,g+1}|U_{M})= \begin{cases}
                         (2a+g-2,2c-g-1,2a+2b+2c-g-6),  & 1 \leq g \leq 2c-3;\\
                         (2a+g-2,2c-g-1,2a+2b-3),       & 2 \leq g \leq 2c-2.
                                             \end{cases}
\end{eqnarray*}
For the edges $\{r_{1,g}r_{1,g+1}|1 \leq g \leq 2b-2\}$, the mixed codes are as follows:
\begin{eqnarray*}
   \gamma_{M}(r_{1,g}r_{1,g+1}|U_{M})= \begin{cases}
                         (2a+2c-3,g-1,2a+2b-g-3),   & g=1;\\
                         (2a+2c+g-5,g-1,2a+2b-g-3), & 2 \leq g \leq 2b-2.
                                             \end{cases}
\end{eqnarray*}
For the edges $ \{s_{1,g}s_{1,g+1}|1 \leq g \leq 2a-4\}$, the mixed codes are as follows:
\begin{eqnarray*}
   \gamma_{M}(s_{1,g}s_{1,g+1}|U_{M})= \begin{cases}
                         (g+1,2a+2c-g-4,2b+2c+g-4), & 1 \leq g \leq 2a-4.
                                             \end{cases}
\end{eqnarray*}
For the edges $\{t_{1,g}t_{1,g+1}|1 \leq g \leq 2c-4\}$, the mixed codes are as follows:
\begin{eqnarray*}
   \gamma_{M}(t_{1,g}t_{1,g+1}|U_{M})= \begin{cases}
                         (2a+g-2,2c-g-1,2a+2b+2c-g-8), & 1 \leq g \leq 2c-4.
                                             \end{cases}
\end{eqnarray*}
For the edges $\{u_{1,g}u_{1,g+1}|1 \leq g \leq 2b-4\}$, the mixed codes are as follows:
\begin{eqnarray*}
   \gamma_{M}(u_{1,g}u_{1,g+1}|U_{M})= \begin{cases}
                         (2a+2c+g-5,g+1,2a+2b-g-5), & 1 \leq g \leq 2b-4.
                                             \end{cases}
\end{eqnarray*}
For the edges $\{p_{2,g}p_{2,g+1}|1 \leq g \leq 2a-2\}$, the mixed codes are as follows:
\begin{eqnarray*}
   \gamma_{M}(p_{2,g}p_{2,g+1}|U_{M})= \begin{cases}
                         (2b+2c-2,2a+2b-g-3,g-1),   & g=1;\\
                         (2b+2c+g-4,2a+2b-g-3,g-1), & 2 \leq g \leq 2a-2.

                                             \end{cases}
\end{eqnarray*}
For the edges $\{q_{2,g}q_{2,g+1}|1 \leq g \leq 2c-2\}$, the mixed codes are as follows:
\begin{eqnarray*}
   \gamma_{M}(q_{2,g}q_{2,g+1}|U_{M})= \begin{cases}
                         (g,2a+2c-2,2b+2c-g-2),     & g=1;\\
                         (g,2a+2c+g-4,2b+2c-g-2), & 2 \leq g \leq 2c-2.

                                             \end{cases}
\end{eqnarray*}
For the edges $\{r_{2,g}r_{2,g+1}|1 \leq g \leq 2b-2\}$, the mixed codes are as follows:
\begin{eqnarray*}
   \gamma_{M}(r_{2,g}r_{2,g+1}|U_{M})= \begin{cases}
                         (2c+g-1,2a+4b-g-6,2b-g-1),  & 1\leq g \leq 2b-3;\\
                         (2c+g-1,2a+2b-3,2b-g-1),    & g=2b-2.
                                             \end{cases}
\end{eqnarray*}
For the edges $\{s_{2,g}s_{2,g+1}|1 \leq g \leq 2a-4\}$, the mixed codes are as follows:
\begin{eqnarray*}
   \gamma_{M}(s_{2,g}s_{2,g+1}|U_{M})= \begin{cases}
                         (2b+2c+g-4,2a+2b-g-5,g+1),    & 1 \leq g \leq 2a-4.
                                             \end{cases}
\end{eqnarray*}
For the edges $\{t_{2,g}t_{2,g+1}|1 \leq g \leq 2b-4\}$, the mixed codes are as follows:
\begin{eqnarray*}
   \gamma_{M}(t_{2,g}t_{2,g+1}|U_{M})= \begin{cases}
                         (2c+g-1,2a+4b-g-8,2b-g-1),    & 1 \leq g \leq 2b-4.
                                             \end{cases}
\end{eqnarray*}
For the edges $\{u_{2,g}u_{2,g+1}|1 \leq g \leq 2c-4\}$, the mixed codes are as follows:
\begin{eqnarray*}
   \gamma_{M}(u_{2,g}u_{2,g+1}|U_{M})= \begin{cases}
                         (g+2,2a+2c+g-4,2b+2c-g-4),    & 1 \leq g \leq 2c-4.
                                             \end{cases}
\end{eqnarray*}
For the edges $\{\eta_{1}=p_{1,1}q_{2,1}, \eta_{2}=s_{1,1}u_{2,1}, \eta_{3}=p_{1,i}q_{1,1}, \eta_{4}=s_{1,i-2}t_{1,1}, \eta_{5}=q_{1,j}r_{1,1}, \eta_{6}=t_{1,j-2}u_{1,1}, \\ \eta_{7}=r_{1,k}p_{2,i}, \eta_{8}=u_{1,k-2}s_{2,l}, \eta_{9}=p_{2,1}r_{2,k}, \eta_{10}=s_{2,1}t_{2,k-2}, \eta_{11}=r_{2,1}q_{2,j}, \eta_{12}=t_{2,1}u_{2,j-2}\}$, the mixed codes are shown in Table \ref{T1}.
\begin{table}[h]
\begin{center}
 \begin{tabular}{|c|c|c|c|}
 \hline
 Edges & Codes & Edges & Codes\\
 \hline
 $\eta_{1}$  & (0,2a+2c-2,2b+2c-4)   & $\eta_{7}$  & (2a+2b+2c-6,2b-2,2a-2)\\
 \hline
 $\eta_{2}$  & (2,2a+2c-4,2b+2c-6)   & $\eta_{8}$  & (2a+2b+2c-8,2b-2,2a-2)\\
 \hline
 $\eta_{3}$  & (2a-2,2c-1,2a+2b+2c-6)& $\eta_{9}$  & (2b+2c-2,2a+2b-3,0)\\
 \hline
 $\eta_{4}$  & (2a-2,2c-1,2a+2b+2c-8)& $\eta_{10}$ & (2b+2c-4,2a+2b-5,2)\\
 \hline
 $\eta_{5}$  & (2a+2c-3,0,2a+2b-3)   & $\eta_{11}$ & (2c-1,2a+4b-6,2b-1)\\
 \hline
 $\eta_{6}$  & (2a+2c-5,2,2a+2b-5)   & $\eta_{12}$ & (2c-1,2a+4b-8,2b-1)\\
 \hline
 \end{tabular}
 \caption{\label{T1}Mixed codes for the edges $\eta_{i}$; $1\leq i\leq 12$.}
 \end{center}
 \end{table}

\hspace{-1.3em}For the edges $\{p_{1,2g}s_{1,2g-1}|1 \leq g \leq a-1\}$, the mixed codes are as follows:
\begin{eqnarray*}
   \gamma_{M}(p_{1,2g}s_{1,2g-1}|U_{M})= \begin{cases}
                         (2g-1,4a+2c-2g-12,4b+2c+2g-13),    & 1 \leq g \leq a-1.
                                             \end{cases}
\end{eqnarray*}
For the edges $\{q_{1,2g}t_{1,2g-1}|1 \leq g \leq c-1\}$, the mixed codes are as follows:
\begin{eqnarray*}
   \gamma_{M}(q_{1,2g}t_{1,2g-1}|U_{M})= \begin{cases}
                         (4a+2g-13,4c-2g-8,4a+2b+2c-2g-16),    & 1 \leq g \leq c-1.
                                             \end{cases}
\end{eqnarray*}
For the edges $\{r_{1,2g}u_{1,2g-1}|1 \leq g \leq b-1\}$, the mixed codes are as follows:
\begin{eqnarray*}
   \gamma_{M}(r_{1,2g}u_{1,2g-1}|U_{M})= \begin{cases}
                         (4a+2c+2g-16,2g-1,4a+2b-2g-13),    & 1 \leq g \leq b-1.
                                             \end{cases}
\end{eqnarray*}
For the edges $\{p_{2,2g}s_{2,2g-1}|1 \leq g \leq a-1\}$, the mixed codes are as follows:
\begin{eqnarray*}
   \gamma_{M}(p_{2,2g}s_{2,2g-1}|U_{M})= \begin{cases}
                         (4c+2b+2g-13,4a+2b-2g-13,2g-1),    & 1 \leq g \leq a-1.
                                             \end{cases}
\end{eqnarray*}
For the edges $\{r_{2,2g}t_{2,2g-1}|1 \leq g \leq b-1\}$, the mixed codes are as follows:
\begin{eqnarray*}
   \gamma_{M}(r_{2,2g}t_{2,2g-1}|U_{M})= \begin{cases}
                         (4c+2g-10,4a+4b-2g-16,4b-2g-8),    & 1 \leq g \leq b-1.
                                             \end{cases}
\end{eqnarray*}
For the edges $\{q_{2,2g}u_{2,2g-1}|1 \leq g \leq c-1\}$, the mixed codes are as follows:
\begin{eqnarray*}
   \gamma_{M}(q_{2,2g}u_{2,2g-1}|U_{M})= \begin{cases}
                         (2g,4a+2c+2g-15,4b+2c-2g-10),    & 1 \leq g \leq c-1.
                                             \end{cases}
\end{eqnarray*}
Now, from these sets of mixed metric codes for the graph $HC_{a,b,c}$, we find that $|V_{1}|=|V_{4}|=2a-1$, $|V_{2}|=|V_{5}|=2c-1$, $|V_{3}|=|V_{6}|=2b-1$, $|W_{1}|=|W_{4}|=2a-3$, $|W_{2}|=|W_{5}|=2c-3$, $|W_{3}|=|W_{6}|=2b-3$, $|P_{1}|=|P_{2}|=2a-2$, $|Q_{1}|=|Q_{2}|=2c-2$, $|R_{1}|=|R_{2}|=2b-2$, $|S_{1}|=|S_{2}|=2a-4$, $|T_{1}|=|U_{2}|=2c-4$, $|T_{2}|=|U_{1}|=2b-4$, $|PS_{1}|=|PS_{2}|=a-1$, $|RU_{1}|=|RT_{2}|=b-1$, $|QT_{1}|=|QU_{2}|=c-1$, and $|V_{7}|=12$. We see that the sum of all of these cardinalities is equal to $|E(HC_{a,b,c})|+ |V(HC_{a,b,c})|$ and which is $18(a+b+c-3)$. Moreover, all of these sets are pairwise disjoint, which implies that $mdim(HC_{a,b,c})\leq 3$. From this, we conclude that the set $U_{M}$ is a mixed metric generator for $HC_{a,b,c}$ of cardinality three. Now, by using equation (1), we find that $mdim(HC_{a,b,c})\geq3$. Hence, $mdim(HC_{a,b,c})=3$, which concludes the theorem.
\end{proof}

\hspace{-5.0mm}In terms of minimum IMMG, we have the following result

\begin{theorem}
For $a,b,c\geq4$, the graph $HC_{a,b,c}$ has an IMMG with cardinality three.
\end{theorem}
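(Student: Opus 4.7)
The plan is to observe that the mixed metric generator $U_M=\{p_{1,1}, r_{1,1}, p_{2,1}\}$ already constructed in the proof of the previous theorem is itself an independent set, so no new construction is required. Concretely, I would define $U_M^{i}:=\{p_{1,1}, r_{1,1}, p_{2,1}\}$ and argue that (i) $U_M^{i}$ is independent in $HC_{a,b,c}$, and (ii) $U_M^{i}$ is an MMG.

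For part (ii), I would simply appeal to the previous theorem, where the full catalogue of mixed codes with respect to this exact triple was shown to produce pairwise distinct tuples for every element of $V(HC_{a,b,c})\cup E(HC_{a,b,c})$. Nothing more needs to be recomputed.

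For part (i), I need to check that no two of $p_{1,1}$, $r_{1,1}$, $p_{2,1}$ are adjacent. The cleanest way is to read off the pairwise distances directly from the mixed codes of the previous proof: the first coordinate of $\gamma_M(r_{1,1}\mid U_M)$ gives $d(p_{1,1},r_{1,1})=2a+2c-2$, the third coordinate of $\gamma_M(p_{1,1}\mid U_M)$ gives $d(p_{1,1},p_{2,1})=2b+2c-1$, and the third coordinate of $\gamma_M(r_{1,1}\mid U_M)$ gives $d(r_{1,1},p_{2,1})=2a+2b-3$. Under the hypothesis $a,b,c\geq 4$ each of these quantities is at least $13$, hence strictly greater than $1$, so no two of the three chosen vertices lie on a common edge. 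This establishes independence.

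Finally, since $mdim(HC_{a,b,c})=3$ by the previous theorem, three is also the minimum possible cardinality of an IMMG, and thus $U_M^{i}$ realises this bound. The only real content here is the distance computation in part (i), and since it reuses the codes already tabulated in the earlier proof, there is no new obstacle beyond verifying that the three distances are $\geq 2$ for $a,b,c\geq 4$; no structural case analysis of adjacency in $HC_{a,b,c}$ is needed.
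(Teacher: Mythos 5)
Your proposal is correct and follows essentially the same route as the paper: both take the very same triple $U_M^{i}=\{p_{1,1},r_{1,1},p_{2,1}\}$ used for the mixed metric generator in the preceding theorem and observe that it is independent, so it is an IMMG of cardinality three. The only difference is that the paper leaves the independence check as ``simple to show,'' whereas you actually verify it by reading the pairwise distances $2a+2c-2$, $2b+2c-1$, $2a+2b-3$ off the tabulated codes and noting each exceeds $1$ for $a,b,c\geq 4$ --- a welcome explicit detail, but not a different argument.
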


\begin{proof}
To show that, for hollow coronoid $HC_{a,b,c}$, there exists an IMMG $U_{M}^{i}$ with $|U_{M}^{i}|=3$, we follow the same technique as used in Theorem $2.1$.\\\\
Suppose $U_{M}^{i} = \{p_{1,1}, r_{1,1}, p_{2, 1}\} \subset V(HC_{a,b,c})$. Now, by using the definition of an independent set and following the same pattern as used in Theorem $2.1$, it is simple to show that the set of vertices $U_{M}^{i}= \{p_{1,1}, r_{1,1}, p_{2, 1}\}$ forms an IMMG for $HC_{a,b,c}$ with $|U_{M}^{i}|=3$, which concludes the theorem. \\
\end{proof}

\begin{center}
  \begin{figure}[h!]
  \centering
   \includegraphics[width=2.5in]{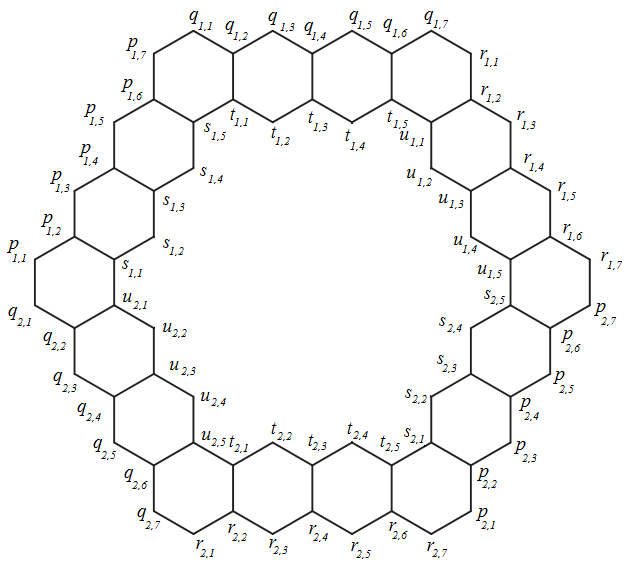}
  \caption{$HC_{4,4,4}$}\label{p2}
\end{figure}
\end{center}

\begin{exm}
If $a, b, c=4$ and $U_{M}=\{p_{1,1}, r_{1,1}, p_{2, 1}\}$ in $HC_{a,b,c}$ (as shown in Fig. 2), the mixed metric codes are as follows:\\\\
For vertices the mixed are shown in Table \ref{T2}:
\begin{table}[h]
\begin{center}
 \begin{tabular}{|c|c|c|c|c|c|c|c|}
 \hline
 Vertices   & Codes       & Vertices   & Codes    & Vertices   & Codes     & Vertices   & Codes \\
 \hline
 $p_{1,1}$  & (0,14,15)   & $q_{1,1}$  & (7,7,18) & $r_{1,1}$  & (14,0,13) & $t_{1,3}$  & (9,5,14)\\
 \hline
 $p_{1,2}$  & (1,13,14)   & $q_{1,2}$  & (8,6,17) & $r_{1,2}$  & (13,1,12) & $t_{1,4}$  & (10,4,13)\\
 \hline
 $p_{1,3}$  & (2,12,15)   & $q_{1,3}$  & (9,5,16) & $r_{1,3}$  & (14,2,11) & $t_{1,5}$  & (11,3,12)\\
 \hline
 $p_{1,4}$  & (3,11,16)   & $q_{1,4}$ & (10,4,15) & $r_{1,4}$  & (15,3,10) & $t_{2,1}$  & (8,16,7)\\
 \hline
 $p_{1,5}$  & (4,10,17)   & $q_{1,5}$ & (11,3,14) & $r_{1,5}$  & (16,4,9)  & $t_{2,2}$  & (9,15,6)\\
 \hline
 $p_{1,6}$  & (5,9,18)    & $q_{1,6}$ & (12,2,13) & $r_{1,6}$  & (17,5,8)  & $t_{2,3}$  & (10,14,5)\\
 \hline
 $p_{1,7}$  & (6,8,19)    & $q_{1,7}$ & (13,1,14) & $r_{1,7}$  & (18,6,7)  & $t_{2,4}$  & (11,13,4)\\
 \hline
 $p_{2,1}$  & (15,13,0)   & $q_{2,1}$ & (1,15,14) & $r_{2,1}$  & (8,18,7)  & $t_{2,5}$  & (12,12,3)\\
 \hline
 $p_{2,2}$  & (14,12,1)   & $q_{2,2}$ & (2,14,13) & $r_{2,2}$  & (9,17,6)  & $u_{1,1}$  & (12,2,11)\\
 \hline
 $p_{2,3}$  & (15,11,2)   & $q_{2,3}$ & (3,15,12) & $r_{2,3}$  & (10,16,5) & $u_{1,2}$  & (13,3,10)\\
 \hline
 $p_{2,4}$  & (16,10,3)   & $q_{2,4}$ & (4,16,11) & $r_{2,4}$  & (11,15,4) & $u_{1,3}$  & (14,4,9)\\
 \hline
 $p_{2,5}$  & (17,9,4)    & $q_{2,5}$ & (5,17,10) & $r_{2,5}$  & (12,14,3) & $u_{1,4}$  & (15,5,8)\\
 \hline
 $p_{2,6}$  & (18,8,5)    & $q_{2,6}$ & (6,18,9)  & $r_{2,6}$  & (13,13,2) & $u_{1,5}$  & (16,6,7)\\
 \hline
 $p_{2,7}$  & (19,7,6)    & $q_{2,7}$ & (7,19,8)  & $r_{2,7}$  & (14,14,1) & $u_{2,1}$  & (3,13,12)\\
 \hline
 $s_{1,1}$  & (2,12,13)   & $s_{1,5}$ & (6,8,17)  & $s_{2,4}$  & (16,8,5)  & $u_{2,2}$  & (4,14,11)\\
 \hline
 $s_{1,2}$  & (3,11,14)   & $s_{2,1}$ & (13,11,2) & $s_{2,5}$  & (17,7,6)  & $u_{2,3}$  & (5,15,10)\\
 \hline
 $s_{1,3}$  & (4,10,15)   & $s_{2,2}$ & (14,10,3) & $t_{1,1}$  & (7,7,16)  & $u_{2,4}$  & (6,16,9)\\
 \hline
 $s_{1,4}$  & (5,9,16)    & $s_{2,3}$ & (15,9,4)  & $t_{1,2}$  & (8,6,15)  & $u_{2,5}$  & (7,17,8)\\
 \hline
 \end{tabular}
 \caption{\label{T2}Mixed codes for the vertices of $HC_{4,4,4}$.}
 \end{center}
 \end{table}
 
\hspace{-1.3em}For edges the mixed are shown in Table \ref{T3}:
\begin{table}[h]
\begin{center}
 \begin{tabular}{|c|c|c|c|c|c|c|c|}
 \hline
 Edges   & Codes       & Edges    & Codes    & Edges    & Codes     & Edges    & Codes \\
 \hline
 $p_{1,1}p_{1,2}$  & (0,13,14)   & $q_{1,6}q_{1,7}$  & (12,1,13))& $r_{2,3}r_{2,4}$  & (10,15,4) & $t_{1,5}r_{1,1}$  & (11,2,11)\\
 \hline
 $p_{1,2}p_{1,3}$  & (1,12,14)   & $q_{1,7}r_{1,1}$  & (13,0,13) & $r_{2,4}r_{2,5}$  & (11,14,3) & $t_{2,1}t_{2,2}$  & (8,15,6)\\
 \hline
 $p_{1,3}p_{1,4}$  & (2,11,15)   & $q_{2,1}q_{2,2}$  & (1,14,13) & $r_{2,5}r_{2,6}$  & (12,13,2) & $t_{2,2}t_{2,3}$  & (9,14,5)\\
 \hline
 $p_{1,4}p_{1,5}$  & (3,10,16)   & $q_{2,2}q_{2,3}$  & (2,14,12) & $r_{2,6}r_{2,7}$  & (13,13,1) & $t_{2,3}t_{2,4}$  & (10,13,4)\\
 \hline
 $p_{1,5}p_{1,6}$  & (4,9,17)    & $q_{2,3}q_{2,4}$  & (3,15,11) & $r_{2,7}p_{2,1}$  & (14,13,0) & $t_{2,4}t_{2,5}$  & (11,12,3)\\
 \hline
 $p_{1,6}p_{1,7}$  & (5,8,18)    & $q_{2,4}q_{2,5}$  & (4,16,10) & $s_{1,1}s_{1,2}$  & (2,11,13) & $t_{2,5}s_{2,1}$  & (12,11,2)\\
 \hline
 $p_{1,7}q_{1,1}$  & (6,7,18)    & $q_{2,5}q_{2,6}$  & (5,17,9)  & $s_{1,2}s_{1,3}$  & (3,10,14) & $u_{1,1}u_{1,2}$  & (12,2,10)\\
 \hline
 $p_{2,1}p_{2,2}$  & (14,12,0)   & $q_{2,6}q_{2,7}$ & (6,18,8)  & $s_{1,3}s_{1,4}$  & (4,9,15)  & $u_{1,2}u_{1,3}$  & (13,3,9)\\
 \hline
 $p_{2,2}p_{2,3}$  & (14,11,1)   & $q_{2,7}r_{2,1}$ & (7,18,7)  & $s_{1,4}s_{1,5}$  & (5,8,16)  & $u_{1,3}u_{1,4}$  & (14,4,8) \\
 \hline
 $p_{2,3}p_{2,4}$  & (15,10,2)   & $r_{1,1}r_{1,2}$ & (13,0,12) & $s_{1,5}t_{1,1}$  & (6,7,16)  & $u_{1,4}u_{1,5}$  & (15,5,7)\\
 \hline
 $p_{2,4}p_{2,5}$  & (16,9,3)    & $r_{1,2}r_{1,3}$ & (13,1,11) & $s_{2,1}s_{2,2}$  & (13,10,2) & $u_{1,5}s_{2,5}$  & (16,6,6)\\
 \hline
 $p_{2,5}p_{2,6}$  & (17,8,4)    & $r_{1,3}r_{1,4}$ & (14,2,10) & $s_{2,2}s_{2,3}$  & (14,9,3)  & $u_{2,1}u_{2,2}$  & (3,13,11)\\
 \hline
 $p_{2,6}p_{2,7}$  & (18,7,5)    & $r_{1,4}r_{1,5}$ & (15,3,9)  & $s_{2,3}s_{2,4}$  & (15,8,4)  & $u_{2,2}u_{2,3}$  & (4,14,10)\\
 \hline
 $q_{1,1}q_{1,2}$  & (7,6,17)    & $r_{1,5}r_{1,6}$ & (16,4,8)  & $s_{2,4}s_{2,5}$  & (16,7,5) & $u_{2,3}u_{2,4}$  & (5,15,9)\\
 \hline
 $q_{1,2}q_{1,3}$  & (8,5,16)    & $r_{1,6}p_{1,7}$ & (17,5,7)  & $t_{1,1}t_{1,2}$  & (7,6,15) & $u_{2,4}u_{2,5}$  & (6,16,8)\\
 \hline
 $q_{1,3}q_{1,4}$  & (9,4,15)    & $r_{1,7}p_{2,7}$ & (18,6,6)  & $t_{1,2}t_{1,3}$  & (8,5,14) & $u_{2,5}t_{2,1}$  & (7,16,7)\\
 \hline
 $q_{1,4}q_{1,5}$  & (10,3,14)   & $r_{2,1}r_{2,2}$ & (8,17,6)  & $t_{1,3}t_{1,4}$  & (9,4,13) & $u_{2,1}s_{1,1}$  & (2,12,12)\\
 \hline
 $q_{1,5}q_{1,6}$  & (11,2,13)   & $r_{2,2}r_{2,3}$ & (9,16,5)  & $t_{1,4}t_{1,5}$  & (10,3,12)& $p_{1,2}s_{1,1}$  & (1,12,13)\\
 \hline
 $p_{1,4}s_{1,3}$  & (3,10,15)   & $p_{1,6}s_{1,5}$ & (5,8,17)  & $q_{1,2}t_{1,1}$  & (7,6,16) & $t_{1,3}q_{1,4}$  & (9,4,14)\\
 \hline
 $t_{1,5}q_{1,6}$  & (11,2,12)   & $u_{1,1}r_{1,2}$ & (12,1,11)  & $u_{1,3}r_{1,4}$ & (14,3,9) & $r_{1,6}u_{1,5}$  & (16,5,7)\\
 \hline
 $s_{2,5}p_{2,6}$  & (17,7,5)    & $s_{2,3}p_{2,4}$ & (15,9,3)  & $s_{2,1}p_{2,2}$  & (13,11,1) & $t_{2,5}r_{2,6}$  & (12,12,2)\\
 \hline
 $t_{2,3}r_{2,4}$  & (10,14,4)   & $t_{2,1}r_{2,2}$ & (8,16,6)  & $u_{2,5}q_{2,6}$  & (6,17,8) & $u_{2,3}q_{2,4}$   & (4,15,10)\\
 \hline
 $u_{2,1}q_{2,2}$  & (2,13,12)   &                  &           &                    &         &                     &\\
 \hline
 \end{tabular}
 \caption{\label{T3}Mixed codes for the edges of $HC_{4,4,4}$.}
 \end{center}
 \end{table}
 
\hspace{-1.3em}From these mixed codes in Table \ref{T2} and \ref{T3}, we find that no two elements from $V(HC_{4,4,4})\cup E(HC_{4,4,4})$ are having the same mixed codes implying $U_{M}$ to be a mixed metric generator for $HC_{4,4,4}$. Now, from these lines and by theorem 1, we have $mdim(HC_{4,4,4})=3$.
\end{exm}

\section{Minimum Multiresolving Number for $SP_{a,b,c}$}
In this section, we obtain the multiset dimension for $SP_{a,b,c}$.\\\\
\textbf{Starphene $SP_{a,b,c}$:} Starphene $SP_{a,b,c}$ is a planar graph comprises of three linear polyacenes segments consist of $a$, $b$, and $c$ number of benzene rings joined to a single central benzene ring as shown in Fig. 3. It consists of $a+b+c-2$ number of faces having six sides. Starphene $SP_{a,b,c}$ has $2(a+b+c)$ number of vertices of degree two and $2(a+b+c-3)$ number of vertices of degree three. From this, we find that $\delta(SP_{a,b,c})=2$ and $\Delta(SP_{a,b,c})=3$. The vertex set and the edge set of $SP_{a,b,c}$, are denoted by $V(SP_{a,b,c})$ and $E(SP_{a,b,c})$ respectively. Moreover, the cardinality of edges and vertices in $SP_{a,b,c}$ is given by $|E(SP_{a,b,c})|=5(a+b+c)-9$ and $|V(SP_{a,b,c})|=2(2a+2b+2c-3)$, respectively. The edge and vertex set of $SP_{a,b,c}$ are describe as follows: $V(SP_{a,b,c})=\{p_{1,g}, p_{2,g}|1\leq g\leq 2b-1\}\cup\{q_{1,g}, q_{2,g}|1\leq g\leq 2c-1\}\cup\{r_{1,g}, r_{2,g}|1\leq g\leq 2a-1\}$ \\
and\\
$E(SP_{a,b,c})=\{p_{1,g}p_{1,g+1}, p_{2,g}p_{2,g+1}|1\leq g\leq 2b-2\}\cup\{q_{1,g}q_{1,g+1}, q_{2,g}q_{2,g+1}|1\leq g\leq 2c-2\}\cup\{r_{1,g}r_{1,g+1}, r_{2,g}r_{2,g+1}|1\leq g\leq 2a-2\}\cup \{p_{1,2g-1}p_{2,2g-1}|1\leq g\leq b\}\cup\{p_{1,2g-1}p_{2,2g-1}|1\leq g\leq b\}\cup\{q_{1,2g-1}q_{2,2g-1}|1\leq g\leq c\}\cup\{r_{1,2g-1}r_{2,2g-1}|1\leq g\leq a\}\cup \{p_{1,1}r_{2,1}, r_{1,1}q_{2,1}, q_{1,1}p_{2,1}\}$.\\

We name the vertices $U_{1}=\{p_{1,g}|1\leq g \leq 2b-1\}$ in $SP_{a,b,c}$, as the $p_{1}$-vertices, the vertices $U_{2}=\{p_{2,g}|1\leq g \leq 2b-1\}$ in $SP_{a,b,c}$, as the $p_{2}$-vertices, the vertices $U_{3}=\{q_{1,g}|1\leq g \leq 2c-1\}$ in $SP_{a,b,c}$, as the $q_{1}$-vertices, the vertices $U_{4}=\{q_{2,g}|1\leq g \leq 2c-1\}$ in $SP_{a,b,c}$, as the $q_{2}$-vertices, the vertices $U_{5}=\{r_{1,g}|1\leq g \leq 2a-1\}$ in $SP_{a,b,c}$, as the $r_{1}$-vertices, and the vertices $U_{6}=\{r_{2,g}|1\leq g \leq 2a-1\}$ in $SP_{a,b,c}$, as the $r_{2}$-vertices. In vertices, $p_{1,i}$, $p_{2,i}$, $q_{1,j}$, $q_{2,j}$, $r_{1,k}$, and $r_{2,k}$, the indices $i=2b-1$, $j=2c-1$ and $k=2a-1$. Recently, results regarding vertex metric, edge metric, and mixed metric dimension of $SP_{a,b,c}$ have been reported due to  Ahmad et al. \cite{starphene}, which are as follows:
\begin{theorem}
For $a,b,c\geq 1$, we have $dim(SP_{a,b,c})=2$.
\end{theorem}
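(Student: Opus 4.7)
The plan is a two-step bound. For the lower bound, I would invoke the classical characterization (Chartrand, Eroh, Johnson, Oellermann) that $dim(\Gamma)=1$ if and only if $\Gamma$ is a path. Since $SP_{a,b,c}$ contains branch vertices of degree $3$ (the attachment points between the polyacene arms and the central hexagon), it is not a path graph, so $dim(SP_{a,b,c})\geq 2$. This is one line of argument and requires no computation.

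For the upper bound, I would exhibit an explicit resolving set $W$ of size $2$. The natural candidates are two vertices lying on the central benzene ring but belonging to two different arms, for example $W=\{p_{1,1},q_{1,1}\}$ (or two endpoints $\{p_{1,2b-1},q_{1,2c-1}\}$ at the tips of two different arms). I would then compute, by case analysis on which of the six vertex families $U_1,\ldots,U_6$ a vertex $v$ belongs to, explicit closed-form expressions for the two coordinates of
\[
r(v\mid W)=(d(v,p_{1,1}),d(v,q_{1,1})).
\]
Within each family the index $g$ parameterizes position along a polyacene chain, so both distances are (piecewise) monotone in $g$, which immediately rules out collisions \emph{within} a single family. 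This is essentially the same bookkeeping carried out in Theorem $2.1$ of the paper for the mixed case, only simpler, since we restrict attention to vertices.

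The main work, and the step I expect to be the real obstacle, is the \emph{cross-family} comparison. Three things must be checked: (i) the two vertices at the same level inside a hexagon (the pairs such as $p_{1,g}$ and $p_{2,g}$) must receive distinct codes; (ii) vertices in the $r_1$ and $r_2$ arms, neither of which contains a landmark, must not collide with vertices in the $p$- or $q$-arms that happen to sit at comparable graph distance from the center; (iii) the six vertices incident to the central hexagon must be pairwise distinguished. Items (i) and (iii) are handled by choosing landmarks on opposite sides of the central ring, so that the two coordinates shift by different parities when one moves from the ``upper'' rail to the ``lower'' rail of a polyacene arm. Item (ii) is handled by observing that any vertex in the $r$-arm has both coordinates at least equal to the distance from the central hexagon to the landmarks, whereas vertices deep in the $p$- or $q$-arm have one coordinate significantly smaller than the other, creating a quantitative gap in the code.

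After these checks, one concludes that no two vertices share a code, so $W$ resolves $SP_{a,b,c}$ and $dim(SP_{a,b,c})\leq 2$. Combined with the lower bound, this gives $dim(SP_{a,b,c})=2$. A brief verification on the small case $a=b=c=1$ (the boundary case where the ``arms'' degenerate) should be mentioned to cover the full range $a,b,c\geq 1$ claimed in the statement, since the generic formulas assume the arms are nontrivial.
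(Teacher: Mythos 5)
A preliminary remark on the comparison itself: the paper contains no proof of this statement. Theorems 3.1--3.4 are imported verbatim from Ahmad et al.\ \cite{starphene} as background for the multiset-dimension computation, so there is no in-paper argument to measure your proposal against; it has to stand on its own.

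Your lower bound (the path characterization) is fine. The gap is in the upper bound, and it sits exactly at the step you yourself flagged as ``the real obstacle.'' Your primary candidate $W=\{p_{1,1},q_{1,1}\}$ provably fails. From the edge set, the central hexagon of $SP_{a,b,c}$ is the $6$-cycle $p_{1,1}p_{2,1}q_{1,1}q_{2,1}r_{1,1}r_{2,1}$. Hence $d(p_{1,1},r_{1,1})=d(q_{1,1},r_{1,1})=2$, while also $d(p_{1,1},p_{2,2})=d(q_{1,1},p_{2,2})=2$, since both landmarks are adjacent to $p_{2,1}$ and $p_{2,1}$ is adjacent to $p_{2,2}$. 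So for every $b\geq 2$ the two vertices $r_{1,1}$ and $p_{2,2}$ receive the identical code $(2,2)$, and $W$ is not a resolving set. This is precisely a failure of your item (ii): the claimed ``quantitative gap'' separating the landmark-free $r$-arm from the $p$- and $q$-arms does not exist near the central ring, because $p_{1,1}$ and $q_{1,1}$ occupy rotationally equivalent positions on that ring and the three arms look too much alike from such a symmetric pair.

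The consequence is that the choice of the two landmarks is not a matter of taste to be fixed later --- it is the entire content of the theorem. A correct proof must (a) select a genuinely asymmetric pair (typically involving a degree-two end vertex of one arm, not two equivalent vertices of the central hexagon), and (b) actually tabulate the distance pairs for all six families $U_1,\dots,U_6$ and carry out the cross-family comparison, since, as the collision above shows, monotonicity within each family plus a heuristic for the cross-family cases is not sufficient. Your alternative suggestion $\{p_{1,2b-1},q_{1,2c-1}\}$ is left entirely unverified, so the proposal as written does not establish $dim(SP_{a,b,c})\leq 2$. (Your closing remark about the degenerate case is well taken: for $a=b=c=1$ the graph is just $C_6$, where the dimension is indeed $2$, but the generic formulas would not apply there.)
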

\begin{theorem}
For $a,b,c\geq 1$, we have $edim(SP_{a,b,c})=3$.
\end{theorem}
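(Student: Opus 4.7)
The plan is the standard two-sided approach: exhibit an edge metric generator of cardinality three, and separately rule out every two-vertex subset. For the upper bound $edim(SP_{a,b,c})\le 3$, I would pick one tip vertex from each of the three linear polyacene arms, say
$$W_E = \{p_{1,2b-1},\, q_{1,2c-1},\, r_{1,2a-1}\}.$$
Each arm is a linear polyacene attached to the central hexagon at two adjacent vertices, so for any edge $e$ the distance from a given tip decomposes as a strictly monotone coordinate inside the tip's own arm, plus a detour through the central hexagon when $e$ lies in one of the other two arms. I would group $E(SP_{a,b,c})$ into its natural classes---the chain edges $p_{1,g}p_{1,g+1}$ and $p_{2,g}p_{2,g+1}$ together with their $q$- and $r$-analogues, the rungs $p_{1,2g-1}p_{2,2g-1}$ together with the analogous rungs in the other two arms, and the three central junction edges $p_{1,1}r_{2,1}$, $r_{1,1}q_{2,1}$, $q_{1,1}p_{2,1}$---and write out the three-coordinate edge code for each class in the same tabular style used for $HC_{a,b,c}$ in the proof of Theorem~2.1. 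Within a class, the "own-arm" coordinate is strictly monotone in the index $g$; across classes, the remaining two coordinates separate the classes. The edge-code map is therefore injective, giving $edim(SP_{a,b,c})\le 3$.

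For the lower bound $edim(SP_{a,b,c})\ge 3$, I would argue by contradiction. Suppose some $W=\{x,y\}\subseteq V(SP_{a,b,c})$ were an edge metric generator. Since $SP_{a,b,c}$ has three arms and $|W|=2$, at least one arm---say the $r$-arm---contains no vertex of $W$; that arm is accessible from the rest of the graph only through the two adjacent central-hexagon vertices $r_{1,1},r_{2,1}$. Using this restricted entry corridor, I would push sufficiently far into the $r$-arm that both $x$ and $y$ have effectively entered the arm through a unique vertex of $\{r_{1,1},r_{2,1}\}$, so that their distances to vertices of the $r$-arm become functions of a single position index. I would then exhibit a pair of "opposite" edges in a deep hexagon of the arm---or, when the avoided arm is too short to contain a deep hexagon, a pair of opposite edges of the central hexagon symmetric about the attachment of the missing arm---whose edge codes from both $x$ and $y$ coincide, contradicting edge-resolvability of $W$. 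Boundary values in which one of $a,b,c$ equals $1$ are handled by direct enumeration of the admissible two-vertex subsets.

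The main obstacle is making the lower-bound pairing argument robust across every placement of $\{x,y\}$. Concretely, I would reduce first to the case in which the $r$-arm is $W$-free, then split on whether $W$ lies entirely in one of the other two arms, straddles them, or meets the central hexagon; in each subcase one has to identify explicitly the pair of edges that collides, taking care that the chosen pair lies outside the "asymmetric" initial region near the attachment where $d(x,r_{1,k})$ and $d(x,r_{2,k})$ can still differ by one. Once the correct corridor argument is set up, the collisions are forced by the same top/bottom symmetry of a polyacene arm that makes $dim(SP_{a,b,c})=2$ impossible to upgrade to an edge-resolving pair. The upper-bound verification, by contrast, is mechanical but lengthy, and is best executed as a block of case-by-case code formulas in the style of the proof of Theorem~2.1.
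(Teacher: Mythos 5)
First, note that the paper does not prove this statement at all: Theorems 3.1--3.4 of Section 3, including this one, are quoted from Ahmad et al.\ \cite{starphene} as background for the multiset-dimension computation, so there is no in-paper argument to compare yours against. Judged on its own terms, your plan follows the same template the paper uses for Theorem 2.1 (exhibit an explicit generator, tabulate the codes class by class, then argue the lower bound separately). The upper-bound half is sound in outline: with $W_E=\{p_{1,2b-1},q_{1,2c-1},r_{1,2a-1}\}$ the own-arm coordinate together with the two foreign-arm coordinates does separate, for instance, the chain edges $p_{1,g}p_{1,g+1}$ from $p_{2,g+1}p_{2,g+2}$ (these share the first two coordinates and differ only in the third) and the rungs from their neighbouring chain edges. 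This verification is mechanical but must actually be written out, and the degenerate arms when some parameter equals $1$ (where a tip such as $r_{1,2a-1}=r_{1,1}$ sits on the central hexagon) change the distance formulas, so the promised ``direct enumeration'' is not optional.

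The genuine gap is in the lower bound. Your corridor argument produces a colliding edge pair only when both $x$ and $y$ enter the $W$-free arm through the same gate. Concretely, since $SP_{a,b,c}$ is bipartite, every vertex $x$ outside the $r$-arm satisfies $|d(x,r_{1,1})-d(x,r_{2,1})|=1$; if $x$ is nearer $r_{2,1}$, then the rung $r_{1,2g-1}r_{2,2g-1}$ and the chain edge $r_{1,2g-2}r_{1,2g-1}$ receive the same distance from $x$, whereas if $x$ is nearer $r_{1,1}$ the rung instead collides with $r_{2,2g-2}r_{2,2g-1}$. Hence when $x$ and $y$ lie on opposite sides of the missing arm (say one in the $p$-arm and one in the $q$-arm), the pair that $x$ fails to resolve is resolved by $y$ and vice versa, and your sketch exhibits no single unresolved pair of edges for that configuration; the ``deep hexagon'' and ``opposite edges of the central hexagon'' candidates you name must be checked against both $x$ and $y$ simultaneously, and that is exactly the case where the work lies. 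Since $dim(SP_{a,b,c})=2$, vertex-resolving pairs do exist, so the whole content of $edim(SP_{a,b,c})\geq 3$ is in closing this mixed-gate subcase; until it is closed, the lower bound is not proved.
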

\begin{theorem}
For $a,b,c\geq 1$, we have $mdim(SP_{a,b,c})=3$.
\end{theorem}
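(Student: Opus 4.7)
The plan is to establish the equality by proving a matching lower and upper bound. For the lower bound, $mdim(SP_{a,b,c})\geq 3$ is essentially free: Remark 1 gives $mdim(\Gamma)\geq \max\{dim(\Gamma), edim(\Gamma)\}$, and Theorem 4.2 already supplies $edim(SP_{a,b,c})=3$, so the inequality is immediate. Thus the entire content of the theorem is the upper bound $mdim(SP_{a,b,c})\leq 3$, which must be witnessed by an explicit set of three vertices.

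For the upper bound I would mimic the technique used in Theorem 2.1 for $HC_{a,b,c}$. The natural candidate, exploiting the threefold near-symmetry of starphene across its $p$-, $q$- and $r$-arms, is $U_{M}=\{p_{1,1},q_{1,1},r_{1,1}\}$, one anchor on each arm sitting adjacent to the central benzene ring (a second viable choice is the triple of tips $\{p_{1,2b-1},q_{1,2c-1},r_{1,2a-1}\}$). I would then compute the three-coordinate mixed code of every vertex and every edge, grouped arm by arm and side by side ($p_{1}$-vertices, $p_{2}$-vertices, the analogous $q$- and $r$-blocks, and the connecting edges at the central hexagon). Within each block the coordinate associated to that arm grows monotonically with the index $g$, while the coordinates for the other two arms are large and of a different order of magnitude; this makes same-block resolution and cross-block resolution transparent. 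The final step is to list the few "boundary" edges (namely $p_{1,1}r_{2,1}$, $r_{1,1}q_{2,1}$, $q_{1,1}p_{2,1}$ and the rung edges $p_{1,2g-1}p_{2,2g-1}$, etc.) and check their codes directly.

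The step I expect to be the main obstacle is the edge-versus-vertex distinction that is intrinsic to mixed metric dimension and absent from either $dim$ or $edim$. For any edge $e=uv$ and any anchor $w\in U_{M}$, one has $d(w,e)=\min\{d(w,u),d(w,v)\}$, so the mixed code of $e$ collides with the code of its endpoint $v$ from the $w$-perspective exactly when $w$ lies "behind" $v$ relative to $u$. To separate $e$ from both $u$ and $v$, $U_{M}$ must contain at least one anchor strictly closer to $u$ than to $v$ and at least one strictly closer to $v$ than to $u$; on a highly symmetric structure like starphene this is precisely why two anchors cannot suffice and three are needed. The verification therefore reduces to four families of potential collisions: (i) vertex--vertex within one arm, handled by strict monotonicity of one coordinate; (ii) vertex--vertex across arms, handled by noting which coordinate is the unique small one; (iii) $p_{1,g}$ versus $p_{2,g}$ and their siblings, handled by the chirality of the anchor placement; and (iv) edge versus endpoint, resolved by the observation above. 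Once each family is checked, $U_{M}$ is a mixed metric generator of size three, giving $mdim(SP_{a,b,c})\leq 3$ and closing the theorem.
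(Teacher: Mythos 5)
First, a point of reference: this paper does not actually prove the statement. It is quoted, together with the $dim$, $edim$ and $pd$ results, from Ahmad et al.\ \cite{starphene} as background for the multiset-dimension computation, so there is no in-paper proof to compare your argument against. Your lower bound is fine: Remark 1 together with $edim(SP_{a,b,c})=3$ gives $mdim(SP_{a,b,c})\geq 3$, so the whole burden is indeed the upper bound, exactly as you say.

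The genuine gap is that your primary witness $U_{M}=\{p_{1,1},q_{1,1},r_{1,1}\}$ is not a mixed metric generator, and it fails by precisely the endpoint-separation criterion you formulate in item (iv). These three vertices are not ``adjacent to the central benzene ring''; they are three alternating vertices of it (the central hexagon is $p_{1,1}p_{2,1}q_{1,1}q_{2,1}r_{1,1}r_{2,1}$), so all three anchors sit at the centre and every arm points away from all of them simultaneously. Concretely, for $b\geq 2$ consider the edge $e=p_{1,1}p_{1,2}$ and the vertex $p_{1,1}$: one has $d(p_{1,1},p_{1,2})=1$, $d(q_{1,1},p_{1,1})=d(r_{1,1},p_{1,1})=2$ and $d(q_{1,1},p_{1,2})=d(r_{1,1},p_{1,2})=3$, so no anchor is strictly closer to $p_{1,2}$ than to $p_{1,1}$, and hence $\gamma_{M}(e|U_{M})=(0,2,2)=\gamma_{M}(p_{1,1}|U_{M})$. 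The same collision occurs for $p_{2,1}p_{2,2}$ versus $p_{2,1}$ (both receive $(1,1,3)$) and symmetrically on the $q$- and $r$-arms. Your fallback candidate, the triple of tips $\{p_{1,2b-1},q_{1,2c-1},r_{1,2a-1}\}$, is the right kind of choice --- the anchors must be pushed to the extremities so that for every edge each endpoint has some anchor lying beyond the other endpoint --- but the proposal never carries out the code computation for it, and the degenerate cases $a,b,c\in\{1,2\}$ covered by the statement would need separate checking. As written, the upper bound, which you correctly identify as the entire content of the theorem, is not established.
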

\begin{theorem}
For $a,b,c\geq 1$, we have $pd(SP_{a,b,c})=3$.
\end{theorem}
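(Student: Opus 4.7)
The plan is to sandwich $pd(SP_{a,b,c})$ between $3$ and $3$ using two standard facts about partition dimension: the path characterization, and the bound $pd(\Gamma)\leq dim(\Gamma)+1$. Combined with the previous theorem $dim(SP_{a,b,c})=2$, this yields the result with essentially no combinatorial work specific to starphene.

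For the lower bound I would invoke the classical characterization that $pd(\Gamma)=2$ if and only if $\Gamma$ is a path $P_n$. Since $\Delta(SP_{a,b,c})=3$ (there are vertices of degree three at each junction between a polyacene arm and the central hexagon, as recorded in the structural description of $SP_{a,b,c}$ in Section 4), the graph is not a path, and hence $pd(SP_{a,b,c})\geq 3$.

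For the upper bound, let $W=\{w_1,w_2\}\subset V(SP_{a,b,c})$ be a metric basis, whose existence is guaranteed by the cited theorem $dim(SP_{a,b,c})=2$. Form the ordered partition $\Pi=\{\{w_1\},\{w_2\},V(SP_{a,b,c})\setminus W\}$ and attach to each vertex $v$ the partition representation
\[
r(v\mid\Pi)=\bigl(d(v,w_1),\,d(v,w_2),\,d(v,V(SP_{a,b,c})\setminus W)\bigr).
\]
The third coordinate is $0$ on $V(SP_{a,b,c})\setminus W$ and strictly positive on $W$, so the two singleton classes are separated from the remaining class; within $W$ itself, the first two coordinates distinguish $w_1$ from $w_2$ by the position of the zero; and for any two distinct vertices outside $W$, the first two coordinates already differ because $W$ resolves $SP_{a,b,c}$. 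Thus $\Pi$ is a resolving $3$-partition and $pd(SP_{a,b,c})\leq 3$.

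The potential obstacle, in a proof built from scratch, would be producing an explicit two-vertex resolving set for $SP_{a,b,c}$, which entails a distance case analysis across the three polyacene arms meeting at the central hexagon. That work is, however, already absorbed into the cited metric dimension theorem, so the present argument reduces to the two general-graph facts above together with a few lines of bookkeeping on $\Pi$.
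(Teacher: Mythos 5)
This theorem is one of four results the paper quotes verbatim from Ahmad et al.\ \cite{starphene}; the paper itself contains no proof of it, so there is nothing internal to compare your argument against. On its own terms your derivation is sound and is the cheapest standard route to $pd=3$: both ingredients --- the characterization $pd(\Gamma)=2$ if and only if $\Gamma$ is a path, and the bound $pd(\Gamma)\leq dim(\Gamma)+1$ obtained from the partition $\{\{w_1\},\{w_2\},V\setminus W\}$ --- are classical facts from Chartrand--Salehi--Zhang \cite{pdd}, and your verification that this partition is resolving (third coordinate separates $W$ from its complement, the position of the zero separates $w_1$ from $w_2$, and the resolving property of $W$ separates everything else) is complete and correct. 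What this buys over a from-scratch argument is that every starphene-specific distance computation is absorbed into the cited theorem $dim(SP_{a,b,c})=2$. One small repair is needed in the lower bound: you justify ``not a path'' via $\Delta(SP_{a,b,c})=3$, but for $a=b=c=1$ the structure degenerates to the single central hexagon $C_6$ (the paper's own count $2(a+b+c-3)$ of degree-three vertices is $0$ there), so $\Delta=2$ in that case. The conclusion survives, since a cycle is still not a path, but you should argue non-pathness directly (e.g.\ $SP_{a,b,c}$ always contains a cycle) rather than through the maximum degree.
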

Where $pd(SP_{a,b,c})$ is the partition dimension for $SP_{a,b,c}$ \cite{pdd}. Next, motivated from these worth findings regarding some well-known resolvability parameters for $SP_{a,b,c}$, we are interested to contribute more on this subject. Therefore, we obtain the multiset dimension for $SP_{a,b,c}$ in the next result. Before this, we have to construct sets with some properties in order to carry out this result. Here, we consider $SP_{a,b,c}$ for which we have $V(SP_{a,b,c})=\{p_{1,g}, p_{2,g}|1\leq g\leq 2b-1\}\cup\{q_{1,g}, q_{2,g}|1\leq g\leq 2c-1\}\cup\{r_{1,g}, r_{2,g}|1\leq g\leq 2a-1\}$ and a multiresolving set $U_{ms}$. We denote the sets of vertex multirepresentation for the vertices of $SP_{a,b,c}$ by $M_{1}$, $M_{2}$, $M_{3}$, $M_{4}$, $M_{5}$, and $M_{6}$, where $M_{1}=\{mr_{c}(p_{1,g}|U_{ms})=(a_{1}, a_{2}, a_{3}): a_{1}\leq a_{2}\leq a_{3}\ \&\ 1 \leq g \leq 2b-1\}$, $M_{2}=\{mr_{c}(p_{2,g}|U_{ms})=(a_{1}, a_{2}, a_{3}): a_{1}\leq a_{2}\leq a_{3}\ \&\ 1 \leq g \leq 2b-1\}$, $M_{3}=\{mr_{c}(q_{1,g}|U_{ms})=(a_{1}, a_{2}, a_{3}): a_{1}\leq a_{2}\leq a_{3}\ \&\ 1 \leq g \leq 2c-1\}$, $M_{4}=\{mr_{c}(q_{2,g}|U_{ms})=(a_{1}, a_{2}, a_{3}): a_{1}\leq a_{2}\leq a_{3}\ \&\ 1 \leq g \leq 2c-1\}$, $M_{5}=\{mr_{c}(r_{1,g}|U_{ms})=(a_{1}, a_{2}, a_{3}): a_{1}\leq a_{2}\leq a_{3}\ \&\ 1 \leq g \leq 2a-1\}$, and $M_{6}=\{mr_{c}(r_{2,g}|U_{ms})=(a_{1}, a_{2}, a_{3}): a_{1}\leq a_{2}\leq a_{3}\ \&\ 1 \leq g \leq 2a-1\}$. In the following result, we investigate the multiset dimension for $SP_{a,b,c}$.\\\\
\begin{center}
  \begin{figure}[h!]
  \centering
   \includegraphics[width=3.0in]{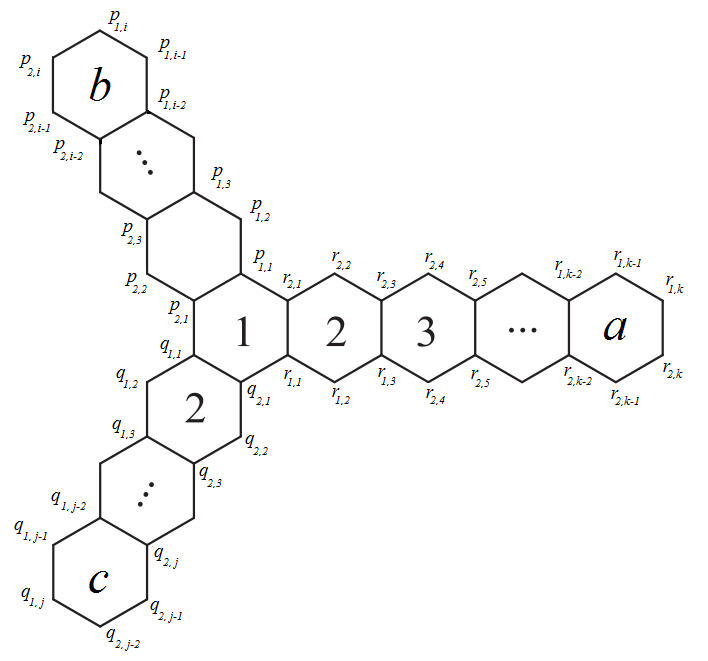}
  \caption{$SP_{a,b,c}$}\label{p2}
\end{figure}
\end{center}
\begin{theorem}
For $a,b,c\geq 3$, we have $msdim(SP_{a,b,c})=4$.
\end{theorem}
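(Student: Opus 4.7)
My plan is to establish $msdim(SP_{a,b,c}) = 4$ by proving the two matching inequalities $msdim(SP_{a,b,c}) \leq 4$ and $msdim(SP_{a,b,c}) \geq 4$, since Lemma \ref{L2} (together with the fact that $SP_{a,b,c} \neq P_n$) already gives the weaker bound $\geq 3$ and rules out nothing smaller than $3$.

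For the upper bound, I would exhibit a specific $4$-element subset $U_{ms} \subseteq V(SP_{a,b,c})$ and check that every pair of vertices receives distinct multirepresentations. A natural candidate, following the style of the $HC_{a,b,c}$ argument in Theorem 2.1, is to take one vertex at the far end of each of the three arms together with one extra vertex breaking the two-rail symmetry within a single arm---for example $U_{ms} = \{p_{1,2b-1}, q_{1,2c-1}, r_{1,2a-1}, p_{2,2b-1}\}$. With such a choice, vertices inside one given arm are separated primarily by the coordinate $g$ (through the distance to that arm's own endpoint), vertices in different arms are distinguished by which entry of the triple attains the minimum, and the fourth coordinate resolves the rail-1 versus rail-2 ambiguity inside the $p$-arm, which then propagates into the other two arms through the parity of paths crossing the central hexagon. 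The verification is a piecewise computation of $mr_c(v|U_{ms})$ on each of the six vertex families $U_1,\dots,U_6$ described just before Fig.~3, organized into the sets $M_1,\dots,M_6$ of sorted triples, after which one shows pairwise that $M_i \cap M_j = \emptyset$ and that within each $M_i$ all triples are distinct.

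For the lower bound, I would argue by contradiction: assume that some $W = \{w_1, w_2, w_3\} \subset V(SP_{a,b,c})$ is a multiresolving set, and exhibit two vertices with identical multirepresentations. The key structural fact is that each polyacene arm admits a local reflection swapping its two rails, so that for any vertex $w$ lying outside a given arm, the distances $d(w, p_{1,g})$ and $d(w, p_{2,g})$ differ by at most one (and by exactly the same controlled amount for every $g$). By pigeonhole, at least two of the three arms contain at most one point of $W$ each, and in fact at least one arm contains no point of $W$; call that arm the $p$-arm. I would then take a rail-paired pair $p_{1,g}, p_{2,g}$ deep inside the $p$-arm (away from the central hexagon) and show that the ordered triples $(d(w_i, p_{1,g}))_{i=1}^3$ and $(d(w_i, p_{2,g}))_{i=1}^3$ agree as multisets by analyzing the parity forced on the path from each $w_i$ through $p_{1,1}$ or $p_{2,1}$ into the $p$-arm, and by choosing $g$ of appropriate parity relative to the entry vertices. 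The sub-cases where $W$ does hit every arm (forcing exactly one $w_i$ per arm) are handled symmetrically by shifting to the arm containing a unique $w_i$ and picking a rail-pair far from it.

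The main obstacle will be the case analysis in the lower bound: it must be done uniformly in $a, b, c \geq 3$ and it must confirm that for any placement of the three points of $W$---possibly inside the central hexagon, possibly clustered in one arm, possibly one per arm---there is a rail-paired collision. The cleanest way I see to organize it is to fix, for each of the three arms, the multiset-preserving rail involution and then show that the only way for $W$ to distinguish all rail-pairs in all three arms simultaneously is to contain at least one vertex on each rail of at least two distinct arms, which forces $|W| \geq 4$. Once this obstruction is formalized, combining it with the explicit multiresolving set $U_{ms}$ of size $4$ from the upper bound yields $msdim(SP_{a,b,c}) = 4$, as claimed.
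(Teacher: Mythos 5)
Your two-inequality skeleton is the logically correct way to prove the stated equality, but be aware that it diverges sharply from what the paper actually does: the paper's proof never argues $msdim(SP_{a,b,c})\ge 4$ at all. It exhibits the four-vertex set $U_{ms}=\{p_{1,i},r_{1,k},q_{2,1},q_{2,3}\}$, then writes down only three-coordinate multirepresentations for it, deduces ``$msdim(SP_{a,b,c})\le 3$'' from a four-element set, and concludes $msdim(SP_{a,b,c})=3$ via Lemma \ref{L2} --- flatly contradicting the ``$=4$'' in the theorem statement (the Conclusions section repeats ``$=3$''). So the paper gives you no model for the lower bound, and its own argument, read charitably, establishes only $3\le msdim(SP_{a,b,c})\le 4$. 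Your recognition that Lemma \ref{L2} is insufficient and that a genuine $\ge 4$ argument is needed is the right instinct, but it means the entire burden of the theorem rests on a step you have only sketched.

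That sketch has a concrete gap. Your pivotal claim is that any three-vertex set $W$ admits a rail-paired collision $mr_c(p_{1,g}|W)=mr_c(p_{2,g}|W)$ in some arm, and you derive it from the observation that $\left|d(w,p_{1,g})-d(w,p_{2,g})\right|\le 1$ for $w$ outside that arm. This does not suffice. For example, with $w=r_{1,2a-1}$ one computes $d(w,p_{1,g})=2a+g-1$ and $d(w,p_{2,g})=2a+g$ for every $g$, so the difference is $+1$ uniformly; if all three witnesses outside the arm have differences of the same sign, the triple for $p_{2,g}$ is the triple for $p_{1,g}$ shifted by $+1$ in every coordinate and the two multisets are \emph{distinct}. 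A multiset collision requires the signed differences $\varepsilon_i=d(w_i,p_{2,g})-d(w_i,p_{1,g})$ to cancel under some permutation of coordinates, an additional combinatorial condition you have not established and which may fail for well-chosen $W$ (indeed, if it fails, the true answer could be $3$, which is what the paper's proof --- however garbled --- asserts). The upper-bound half also remains unverified: your candidate $\{p_{1,2b-1},q_{1,2c-1},r_{1,2a-1},p_{2,2b-1}\}$ is plausible but the pairwise-distinctness check across $M_1,\dots,M_6$ is exactly the part that can silently fail for multisets (sorted triples from different arms can coincide even when ordered triples do not). As it stands you have a correct plan for $\le 4$ and an unproven obstruction for $\ge 4$, hence only $3\le msdim(SP_{a,b,c})\le 4$.
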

\begin{proof}
In order to show that $msdim(SP_{a,b,c}) \leq 4$, we have to construct an MRS for $SP_{a,b,c}$ having cardinality less than or equal to four. Suppose $U_{ms}=\{p_{1,i}, r_{1,k}, q_{2, 1}, q_{2, 3}\}$ be a set of distinct ordered vertices from $SP_{a,b,c}$. We claim that $U_{ms}$ is a multiresolving set for $SP_{a,b,c}$. Now, to obtain $msdim(SP_{a,b,c}) \leq 4$, we can give mutirepresentation to every element of $V(SP_{a,b,c})$ with respect to the set $U_{ms}$.\\\\
For the vertices $\{p_{1,g}|1 \leq g \leq 2b-1\}$, the mutirepresentation are as follows:
\begin{eqnarray*}
   mr_{c}(p_{1,g}|U_{ms}) = \begin{cases}
                         (2b-g-1,2b-g-2,g+1),  & 1\leq g\leq 2b-3;\\
                         (1,2,2b-1),           & g=2b-2;\\
                         (0,3,2b),             & g=2b-1.
                                             \end{cases}
\end{eqnarray*}
For the vertices $\{p_{2,g}|1 \leq g \leq 2b-1\}$, the mutirepresentation are as follows:
\begin{eqnarray*}
   mr_{c}(p_{2,g}|U_{ms}) = \begin{cases}
                         (2b-g,2b-g-3,g+2),  & 1\leq g\leq 2b-3;\\
                         (2,1,2b),           & g=2b-2;\\
                         (1,2,2b+1),         & g=2b-1.
                                             \end{cases}
\end{eqnarray*}
For the vertices $\{q_{1,g}|1 \leq g \leq 2c-1\}$, the mutirepresentation are as follows:
\begin{eqnarray*}
   mr_{c}(q_{1,g}|U_{ms}) = \begin{cases}
                         (2b+g-1,2b+g-4,g+1),   & 1\leq g\leq 2c-1.
                                             \end{cases}
\end{eqnarray*}
For the vertices $\{q_{2,g}|1 \leq g \leq 2c-1\}$, the mutirepresentation are as follows:
\begin{eqnarray*}
   mr_{c}(q_{2,g}|U_{ms}) = \begin{cases}
                         (2b+g,2b+g-3,g),       & 1\leq g\leq 2c-1.
                                             \end{cases}
\end{eqnarray*}
For the vertices $\{r_{1,g}|1\leq g \leq 2a-1\}$, the mutirepresentation are as follows:
\begin{eqnarray*}
   mr_{c}(r_{1,g}|U_{ms}) = \begin{cases}
                         (2b+g-1,2a+g-2,g-1),             & 1 \leq g \leq 2a-1.
                                             \end{cases}
\end{eqnarray*}
For the vertices $\{r_{2,g}|1 \leq g \leq 2a-1\}$, the mutirepresentation are as follows:
\begin{eqnarray*}
   mr_{c}(r_{2,g}|U_{ms}) = \begin{cases}
                         (2b+g-2,2b+g-3,g),                & 1 \leq g \leq 2a-1.
                                             \end{cases}
\end{eqnarray*}

Now, from these mutirepresentation for the vertices of $SP_{a,b,c}$, we find that $|M_{1}|=|M_{2}|=2b-1$, $|M_{3}|=|M_{4}|=2c-1$, and $|M_{5}|=|M_{2}|=2a-1$. We see that the sum of all of these cardinalities is equal to $|V(SP_{a,b,c})|$ and which is $4a+4b+4c-6$. Moreover, all of these sets of multirepresentation for $SP_{a,b,c}$ are pairwise disjoint, which implies that $msdim(SP_{a,b,c})\leq 3$. Now from this and Lemma \ref{L2}, it can be concluded that multiset dimension of $SP_{a,b,c}$ is three i.e., $msdim(SP_{a,b,c})=3$.
\end{proof}

\section{Conclusions}
In this paper, we have studied the mixed metric dimension for a hollow coronoid $HC_{a,b,c}$ and multiset dimension for starphene $SP_{a,b,c}$ structures. We have proved that $mdim(HC_{a,b,c})=3$ and $msdim(SP_{a,b,c})=3$ (a partial response to the problem raised in \cite{msd}). We also observed that the mixed resolving set for $HC_{a,b,c}$ is independent. From preliminary and our proved results, we also found that $dim(HC_{a,b,c})=edim(HC_{a,b,c})=mdim(HC_{a,b,c})=3$. This means that $HC_{a,b,c}$ is the families of graphs having the same constant metric, edge metric and mixed metric dimension. In future, we will try to obtain some other variants of metric dimension for the graphs $HC_{a,b,c}$ and $SP_{a,b,c}$.

\end{document}